\documentclass[final]{siamltex}

\usepackage{amsmath}
\usepackage{mathrsfs}
\usepackage{amsfonts}
\usepackage[dvips]{graphicx}
\usepackage{amsopn}
\usepackage{amstext}
\usepackage{amssymb}
\usepackage{amscd}
\usepackage{latexsym}
\usepackage{setspace} 
%\doublespacing
%\parskip = 5pt minus 1pt

\newtheorem{example}[theorem]{Example}

\DeclareMathOperator{\Prob}{P}
\DeclareMathOperator{\E}{E}

\DeclareMathOperator{\e}{e}

\newcommand{\NN}{\mathbb N}

\newcommand{\comment}[1]{}

\begin{document}

\title{Letter Change Bias and Local Uniqueness in Optimal Sequence 
Alignments} 

\author{Raphael Hauser\footnotemark[2] \;and 
Heinrich Matzinger \footnotemark[3]}

\renewcommand{\thefootnote}{\fnsymbol{footnote}}

\footnotetext[2]{Mathematical Institute, 24–29 St Giles', 
Oxford OX1 3LB, United Kingdom. (hauser@maths.ox.ac.uk).  This work was supported by the 
Engineering and Physical Sciences Research Council [grant number EP/H02686X/1].}
\footnotetext[3]{School of Mathematics, 
Georgia Institute of Technology, 686 Cherry Street, Atlanta, 
GA 30332-0160, USA, (matzi@math.gatech.edu).}

\renewcommand{\thefootnote}{\arabic{footnote}}

\maketitle

\begin{abstract}
Considering two optimally aligned random sequences, we investigate the effect on the 
alignment score caused by changing a random letter in one of the two sequences. Using 
this idea in conjunction with large deviations theory, we show 
that in alignments with a low proportion of gaps the optimal alignment is locally unique 
in most places with high probability. This has implications in the design of recently pioneered 
alignment methods that use the local uniqueness as a homology indicator. 
\end{abstract}

\begin{AMS}
Primary  60F10, Secondary 92D20.
\end{AMS}

\begin{keywords} 
Random sequences, sequence alignment, large deviations.
\end{keywords}

%------------------------------------------------------------------------
\section{Introduction}
\label{introduction}
%------------------------------------------------------------------------
The purpose of this paper is to gain insight into the local multiplicity 
of optimal alignments of two random binary sequences. Before introducing the problem setting, 
let us give a brief motivation and literature review. 

%------
\subsection{Motivation}\label{why?}
%------
A fairly general and useful technique to identify high quality alignments of 
two sequences $x=$'$x_1$\dots$x_m$' and $y=$'$y_1$\dots$y_n$' with characters 
from a finite alphabet ${\mathbb A}$ is to consider alignments with gaps $\sqcup$, 
\begin{equation*}
\begin{array}{ccccc}
\sqcup&x_1&x_2&x_3&\sqcup\\
y_1&y_2&\sqcup&y_3&y_4
\end{array}
\end{equation*}
and to quantify the quality of such an alignment with a score of the form 
\begin{equation}\label{scoring function}
S(x,y)=s(\sqcup,y_1)+s(x_1,y_2)+s(x_2,\sqcup)+s(x_3,y_3)+s(\sqcup,y_4).
\end{equation}
When a scoring function of the form \eqref{scoring function} is used, 
a good choice of individual scores $s(a,b)$ of matched symbols $a$ and $b$ 
and the choice of the alphabet ${\mathbb A}$ depend on the specific 
application one has in mind. The matching of any symbol $a\in{\mathbb A}$ 
with a gap $\sqcup$ is typically penalized by a negative score term 
$s(a,\sqcup)$, $s(\sqcup,a)<0$. 

The simplest similarity measure of the form \eqref{scoring function} arises as 
the length of a {\em longest common subsequence} (LCS). A common 
subsequence is any sequence that can be obtained by deleting some characters 
of either sequence and keeping the remaining ones in the original order. The 
length of a longest common subsequence of two sequences $x$ and $y$ is the 
same as the score $S(x,y)$, where individual scoring terms are defined as follows, 
\begin{equation*}
s(a,b)=\begin{cases}1\quad&\text{if }a=b\neq\sqcup,\\
\infty&\text{if }a\neq b\text{ and }a,b\neq\sqcup,\\
0&\text{if }a=\sqcup\text{ or }b=\sqcup,\text{ but not both}.
\end{cases}
\end{equation*}

Sequence alignment techniques play an 
important role in biology 
(see e.g.\ \cite{Watermangeneralintro}), speech recognition, 
pattern recognition, automated translation and other areas where hidden 
Markov models are used as an analytic tool. The study of optimal alignments 
of random sequences is also interesting to statistical physicists, 
because it can be seen as a first passage percolation problem on an oriented 
graph with correlated weights. First passage percolation is a mature field 
and has been a major research area in discrete probability for several 
decades. An excellent overview can be found in the chapter dedicated to 
first passage percolation of Volume 110 of the Springer Encyclopaedia of Mathematical 
Sciences.

Among several long-standing open questions in this field are the problems of 
identifying the exact order of the fluctuations and the proportion of points 
where the optimal alignment is unique. Recently, significant progress has been made 
on both questions: In several special cases it was shown that a positive bias effect 
of a random letter-change on the optimal alignment score of two random sequences 
of length $n$ exists and implies an order of fluctuation proportional to $\sqrt{n}$, 
see \cite{BonettoLCS}, \cite{Lember}, \cite{HoudreLemberMatz}, \cite{LemberMatzAnnals}. 
In \cite{AmsaluHauserMatzinger} it was shown how to apply this principle to 
arbitrary scoring functions, and how to detect the positive bias effect via a nontrivial 
Montecarlo technique. In \cite{Hirmo}, a case study was conducted on using the 
local uniqueness of optimal alignments to detect the homology of two DNA sequences. 
The motivation behind this method is the empirical observation that all optimal alignments of 
two biologically related sequences are identical in most places, while optimal alignments 
of unrelated sequences are locally nonunique in most places. 

Our paper concerns a theoretical study of this last observation. Considering two 
independent random sequences with i.i.d.\ characters, we examine their optimal alignments 
containing a fixed proportion of gaps and prove that when the proportion of gaps is 
small, then with high probability optimal alignments differ only in a small number of places 
and are locally unique everywhere else. Our result implies that the approach of \cite{Hirmo} 
can only work for scoring functions in which gaps are not penalized too heavily, as this 
would force the number of gaps appearing in optimal alignments to be small relative to the 
length of the two sequences. 

Optimal alignments of random sequences are often used as null-models in statistical 
tests to decide on whether two or more given sequences are homologous. The mathematical 
underpinnings are best understood in the context of the LCS setting. Let $L_n$ denote 
the length of the LCS of two independent binary i.i.d.\ sequences of 
length $n$. Using a subadditivity argument, Chv\`atal-Sankoff 
\cite{Sankoff1} showed that the limit
\begin{equation*}
\gamma:=\lim_{n\rightarrow\infty}\frac{E[L_n]}{n}
\end{equation*}
exists. Determining the exact value of $\gamma$ -- the so-called 
{\em Chv\`atal-Sankoff constant} -- is a long standing open problem. 
However, upper and lower bounds are known, see \cite{Sankoff1}, 
\cite{Baeza1999}, \cite{Deken}, \cite{Paterson1,Paterson2}, 
\cite{HauserMartinezMatzinger}, \cite{Alexander}.

Another long open problem is the determination of   
the exact order of the fluctuation of the LCS score as a function of the length of the 
sequences. Considering the case of 
binary sequences obtained by flipping perfect coins, it was shown in \cite{steele86} that 
$VAR[L_n]\leq n$. Montecarlo simulations in \cite{Sankoff1} led to the conjecture that $VAR[L_n]=o(n^{2/3})$. This 
order of magnitude is similar to the order for the so-called 
{\em longest increasing subsequence} (LIS) of random permutations, 
see \cite{BaikDeiftJohansson99} and \cite{Aldous99}). The LIS setting is asymptotically equivalent 
to first passage percolation on a oriented Poisson random graph. 
In \cite{Waterman-estimation} it was 
conjectured that in many cases the variance of $L_n$ grows linearly. This 
seems indeed to be the case generically \cite{AmsaluHauserMatzinger}, but there 
may exist different orders of magnitude for these fluctuations, depending 
on the distribution of the sequences $X$ and $Y$, see \cite{BonettoLCS}, \cite{Lember}, 
\cite{watermanphase}.

%----------
\subsection{Problem Setting and Key Ideas}\label{problem setting}
%----------
Let us consider the set of optimal 
LCS alignments of the two sequences 'I-do-not-like-symmetry' and 'I-detest-symmetry'. 
If we were to give an exhaustive list of optimal alignments, we would find 
that all of them are of the form 
\begin{equation*}
\begin{array}{ccccccccccccccc}
I&-&d&*&\dots&*&-&s&y&m&m&e&t&r&y\\
I&-&d&*&\dots&*&-&s&y&m&m&e&t&r&y,
\end{array}
\end{equation*}
that is, all optimal LCS alignments agree in the region outside of the 
places filled with wild card stars $*$. 
We say that in this region the optimal alignment is {\em locally unique}. 
In our example the optimal LCS alignment is locally unique on a 
large proportion of the two sequences. 
 
This observation is typical not only in the LCS setting, but for general 
scoring functions $S(x,y)$ as introduced in \eqref{scoring function}: When 
two sequences are closely related 
to one another, the optimal alignments are locally unique in many places. 
Conversely, the optimal alignments of two random sequences with i.i.d.\ entries 
often are locally unique only in very few places. \cite{Hirmo} exploited this 
observation in the design of a homology detecting algorithm for DNA sequences. 
However, as this paper will reveal, for this method to work one has to select the 
gap penalty $s(a,\sqcup)$ quite carefully: When 
gaps are strongly penalized, then no more than a constant proportion of gaps 
are observed in optimal alignments, and if the proportion of gaps is small then 
the optimal alignment is locally unique in most places even for i.i.d.\ 
random (and hence totally unrelated) sequences. 

Our paper concerns a theoretical analysis of this phenomenon. To 
make the analysis transparent, we chose a simplified setup in which 
$X=X_1\dots X_m$ and $Y=Y_1\dots Y_n$ are random sequences consisting of 
i.i.d.\ standard Bernoulli variables, where $m=\lfloor(1-\delta)n\rfloor$ 
depends on $n$ via a fixed gap proportion $\delta$. We then investigate 
alignments of $X$ and $Y$ that contain gaps only in $X$, and we use 
a scoring function in which matching symbols contribute to the 
total score with unit weight and non-matching ones with zero weight. 
Our interest is in the random number $U\leq m$ of indices $i$ for which 
$X_i$ is aligned with more than one $Y_j$ under the different optimal 
alignments. The main theorem of this paper shows that when $\delta$ 
is small and $n$ is large, optimal alignments are locally unique in 
an arbitrarily large proportion of places with arbitrarily high 
probability:

\begin{theorem}\label{maintheorem}
For all $\varepsilon>0$ there exists $\delta_0>0$ and $n_0\in\NN$ 
such that for all $\delta\in(0,\delta_0)$ and $n>n_0$, 
$\Prob\left[U\geq m\varepsilon\right]<\varepsilon$.\\
\end{theorem}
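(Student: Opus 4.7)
The plan is to apply Markov's inequality to $U$ and reduce the theorem to a per-position estimate for the probability of local non-uniqueness. By Markov, $\Prob[U\geq m\varepsilon]\leq \E[U]/(m\varepsilon)$, so it suffices to show that $\E[U]/m$ can be made smaller than $\varepsilon^{2}$ by choosing $\delta$ small and $n$ large. Writing $\E[U]=\sum_{i=1}^{m}\Prob[E_i]$ with $E_i$ the event that position $i$ is locally non-unique, the problem reduces to bounding $\Prob[E_i]$ uniformly in $i$ and $n$, with the bound vanishing as $\delta\to 0$. The structural observation I would use is that on $E_i$ there exist two optimal alignments $\pi,\pi'$ differing at $i$, together with a minimal window $[i_0,i_1]\ni i$ outside of which $\pi$ and $\pi'$ agree. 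Inside the window the two restrictions realize distinct alignments of $X_{i_0}\dots X_{i_1}$ with a common substring of $Y$ producing equal match counts (a ``local tie''), and the corresponding $Y$-substring necessarily contains at least one gap of the global alignment.

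To bound $\Prob[E_i]$, I would stratify by the window length $k=i_1-i_0+1$. For short windows $k\leq K$, the local tie event with fixed endpoints is a coincidence of two Bernoulli-sum-type variables whose probability is $O(k^{-1/2})$ by the local central limit theorem, while the expected number of admissible windows of length $k$ (those containing a gap of the optimal alignment near position $i$) is small when $k\ll 1/\delta$; the resulting contribution tends to zero with $\delta$. For long windows $k>K$, large deviation estimates applied to the Bernoulli-sum difference yield an exponentially decaying tie probability, giving a contribution at most $e^{-cK}$ to $\Prob[E_i]$. Optimizing over $K$ produces a bound $\Prob[E_i]\leq f(\delta)$ with $f(\delta)\to 0$ as $\delta\to 0$, which through Markov yields the theorem for suitably chosen $\delta_0$ and $n_0$.

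The main obstacle is the conditioning introduced by requiring $\pi$ and $\pi'$ to be globally optimal, which couples the interior of the disagreement window to the rest of $(X,Y)$ in a nontrivial way. The letter-change bias methodology developed in \cite{AmsaluHauserMatzinger} and its accompanying large deviations framework is, I expect, the right tool to decouple this conditioning: quantitative control on how the optimal alignment score reacts to a single random letter flip translates into quantitative control over how optimality biases the local bit statistics, allowing the CLT and large deviation estimates above to be applied essentially unconditionally, modulo error terms that vanish with $\delta$. Carrying out this decoupling rigorously, and making the window-length stratification quantitative enough to extract an explicit dependence $\delta_0(\varepsilon)$, constitutes the bulk of the technical work.
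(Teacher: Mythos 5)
Your overall plan --- Markov's inequality applied to $U=\sum_i U_i$, followed by a per-position bound on $\Prob[E_i]$ via a window-length stratification --- is a genuinely different route from the paper's. The paper never passes through $\E[U]$ or any per-position estimate. It works globally: flipping one uniformly random bit of $X$ is measure preserving, so $\E[\Delta]=0$ with $\Delta=S^*(\tilde X,Y)-S^*(X,Y)$. Lemma~\ref{lemma1.2} produces a single pair of optimal alignments $(\Xi,\Lambda)$ whose disagreement set is exactly $\{i:U_i=1\}$, and Lemma~\ref{lem2.1} gives Azuma--Hoeffding concentration of the relevant empirical distributions \emph{simultaneously over all} $\exp(nH(\delta))$ candidate alignment pairs; Lemma~\ref{lem3.1} then shows that on $\{U\geq m\varepsilon\}\cap F_{m,n}$ the flip has strictly positive expected score gain, which is played off against $\E[\Delta]=0$ and the error control of Lemma~\ref{lem3.2}.

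The gap in your proposal is the one you flag yourself, and it is not peripheral --- it is the whole difficulty. The event $E_i$ is defined in terms of \emph{global} optimality of two alignments, and your local-tie/LCLT step treats the bits inside the window $[i_0,i_1]$ as if they were an unconditioned Bernoulli sample; conditioning on $\pi,\pi'$ being globally optimal (and on the window being minimal) changes the law of those bits in a way you do not control, and the two match-count sums you compare are also not independent. You propose to repair this via the letter-change bias methodology, asserting that it ``translates into quantitative control over how optimality biases the local bit statistics,'' but neither this paper nor the cited framework proves such a statement; the paper's use of the bit-flip is designed precisely to \emph{avoid} ever conditioning on the optimality of a particular alignment pair. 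The identity $\E[\Delta]=0$ is unconditional, and the concentration events hold for every alignment pair at once (the $\exp(nH(\delta))$ union bound is exactly why $\delta$ must be small), so the optimality-induced bias never has to be understood. As written, your plan reduces Theorem~\ref{maintheorem} to an unproved claim about the conditional law of $(X,Y)$ given optimality of two fixed alignments, which is at least as hard as the theorem itself.
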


While it is clear that $\Prob[U>m\varepsilon]=0$ when $\delta=0$ 
for any $n$, the theorem shows the nontrivial fact that the limit 
$\lim_{n\rightarrow\infty}\Prob[U>m\varepsilon]$ is continuous 
in $\delta$ at $\delta=0$. Furthermore, the proof provides the 
quantitative estimate 
\begin{equation*}
\Prob\left[U\geq m\varepsilon\right]
\leq\frac{{\mathcal O}\left(\delta^{\frac{1}{2}}\right)}
{{\mathcal O}(\varepsilon)+{\mathcal
O}\left(\delta^{\frac{1}{2}}\right)}+{\mathcal O}
\left(\e^{-n\delta}\right).
\end{equation*}

Theorem \ref{maintheorem} is also very interesting in the 
context of the 
Chv\`atal-Sankoff conjecture which concerns the order of magnitude of 
the fluctuation of the LCS of two random texts.

We recall the assumptions under which we prove our main result and which
will remain valid throughout the rest of this paper: Let $n\in\mathbb{N}$ 
and let $0<\delta<1$ be a fixed constant not 
depending on $n$. We set $m=\lfloor n-\delta n\rfloor$ and define two 
independent random sequences $X=X_1\dots X_m$ and $Y=Y_1\dots Y_n$ by choosing 
$X_1,\dots,X_m$ and $Y_1,\dots,Y_n$ as i.i.d.\ Bernoulli variables with 
parameter $1/2$ (i.e., coin tossing experiments). We then consider alignments 
\begin{equation*}
\begin{array}{ccccccccccc}
\sqcup&\dots&\sqcup&X_1&\sqcup&\dots&\sqcup&X_m&\sqcup&\dots&\sqcup\\
Y_1&\dots&Y_{\xi(1)-1}&Y_{\xi(1)}&Y_{\xi(1)+1}&\dots&Y_{\xi(m)-1}&
Y_{\xi(m)}&Y_{\xi(m)+1}&\dots&Y_n
\end{array}
\end{equation*}
with gaps in $X$ only and attribute to it the score 
\begin{equation*}
S(X,Y;\xi)=\#\{i\in\NN_m:\,X_i=Y_{\xi(i)}\},
\end{equation*}
where $\#$ denotes the cardinality of a set and $\NN_n:=\{1,\dots,n\}$. 
The set of alignments $\xi$ that maximize $S(X,Y;\xi)$ is denoted by 
${\mathcal OA}_{X,Y}$. Of course, this is a random set of alignments, 
since $X$ and $Y$ are random. We write 
$S^*(x,y):=\max_{\xi}S(X,Y;\xi)$ for the maximum score and 
\begin{equation*}
U:=\#\{i\in\NN_m:\,\exists\,\xi,\lambda\in{\mathcal OA}_{X,Y}
\text{ s.t. }\xi(i)\neq\lambda(i)\}
\end{equation*}
for the number of positions where $X$ is not uniquely aligned with $Y$ 
among the alignments with maximum score. $U$ is a random 
variable. 

Theorem \ref{maintheorem} states that 
$\Prob\left[U\geq m\varepsilon\right]<\varepsilon$ 
for all $n$ large enough and $\delta$ small enough. In other words, 
for large $n$ and small gap proportion $\delta$ the optimal alignment 
is locally unique in a $(1-\epsilon)$-proportion of the sequence $X$ 
with probability greater than $1-\epsilon$.  This result is qualitatively representative 
for what occurs 
with regards to the local uniqueness of optimal alignments of random sequences
under {\em arbitrary} scoring functions $S(X,Y)$ as defined in \eqref{scoring 
function} whenever gaps are strongly penalized, i.e., $s(a,\sqcup)$ is a 
negative number of not too small a modulus. Indeed, strong gap penalization 
prevents optimal alignments from having more than a small proportion of 
gaps. Furthermore, allowing gaps only in one of the sequences is merely a 
technical assumption that vastly simplifies the analysis.

Let us now briefly explain the main idea behind the proof of Theorem
\ref{maintheorem}. We define a measure preserving map by picking an 
entry of $X$ at random and flipping it to the ``opposite'' value, i.e., a $0$ is 
changed to a $1$ or vice versa (we imagine $X$ as a line-up of randomly 
tossed fair coins). We denote the sequence obtained in this fashion 
by $\tilde{X}$. Since this operation is measure preserving, we have 
\begin{equation}\label{fundamental}
\E[\Delta]=0,
\end{equation}
where $\Delta:=S^*(\tilde{X},Y)-S^*(X,Y)$. A crucial observation is now 
that when the optimal alignment is nonunique in a large proportion of places, 
then the optimal score tends to increase under this measure-preserving 
map. We illustrate this phenomenon in Example \ref{henry's example1} below. 
Together with \ref{fundamental} this observation implies that the probability 
that the optimal alignment is nonunique in many points is small.

\begin{example}\label{henry's example1}
Consider the case where $n=8$, $m=6$, $\delta=1/4$ and $X$ and $Y$ take the 
values $x=001110$ and $y=11110011$. There are two optimal alignments, $\xi$ 
given by $\xi(i)=i$ for $(i=1,\dots,6)$ or 
\begin{equation*}
\begin{array}{cccccccc}
0&0 &1 &1&1&0&\sqcup&\sqcup\\
1&1 &1 &1&0&0&1&1,
\end{array}
\end{equation*}
and $\lambda$ given by $\lambda(i)=i$ for $(i=1,\dots,4)$ and  
$\lambda(5)=7$, $\lambda(6)=8$ or 
\begin{equation*}
\begin{array}{cccccccc}
0&0 &1 &1&\sqcup&\sqcup&1&0\\
1&1 &1 &1&0&0&1&1.
\end{array}
\end{equation*}
The optimal score is $S^*(x,y)=S(x,y;\xi)=S(x,y;\lambda)=3$. 
\end{example}

The following combinatorial property holds for arbitrary alignments 
$\xi,\lambda$ of $x$ and $y$, not only optimal ones: If $i\in\{1,\dots,m\}$ 
is such that
\begin{equation}\label{bitneq}
y_{\xi(i)}\neq y_{\lambda(i)}
\end{equation}
then flipping $x_i$ to the opposite value 
increases at least one of the scores $S(x,y;\xi)$, 
$S(x,y;\lambda)$ by one unit. In particular, if $\xi$ and $\lambda$ are both 
optimal alignments and condition \eqref{bitneq} holds, then flipping $x_i$ 
to the opposite value increases the optimal score by one unit.
For the chosen values of $x$ and $y$ we find  that $i=5,6$ satisfy condition 
\eqref{bitneq}. Flipping the value of $x_5$ from $1$ to $0$, we find that 
the score of $\xi$ increases to $4$ and the score of $\lambda$ decreases to 
$2$. The maximum score is now $4$. Similarly, flipping $x_6$ from $0$ to $1$, 
the score of $\xi$ decreases to $2$ whereas the score of $\lambda$ increases to 
$4$. The new maximum score is again $4$. If an entry $x_T$ of $x$ is 
flipped, where $T$ is a random index in $\NN_m$, then this implies 
\begin{equation}\label{petit0}
\E[\Delta\|X=x,Y=y,\xi(T)\neq\lambda(T),Y_{\xi(T)}\neq Y_{\lambda(T)}]=1.
\end{equation}

On the other hand, if one of the entries $x_1,\dots,x_4$ that do not satisfy 
\eqref{bitneq} is flipped, then the maximum 
score can either increase or decrease: For $i=1\dots,4$, we have $\xi(i)=
\lambda(i)$. The entries $x_1$ and $x_2$ are aligned with non-matching symbols, 
so that flipping one of these entries increases the optimal score by one. The 
entries $x_3$ and $x_4$ are aligned with matching symbols. In the present 
example, switching one of these entries results in a decrease of the optimal 
score by one unit, though in other cases the maximum score can remain unchanged 
(but it will then be attained by a different alignment). 
Thus, we find that if a random entry $x_{T}$ of $x$ is flipped (where $T\in\NN_m$) 
then for the above choices of $x$ and $y$, 
\begin{equation}\label{petit}
\E[\Delta\|X=x,Y=y,\xi(T)=\lambda(T)]\geq 0.
\end{equation}
For the same reason, if there were any indices $i$ such that $\xi(i)\neq\lambda(i)$ 
where \eqref{bitneq} does not hold, then we would find 
\begin{equation*}
E[\Delta\|X=x,Y=y,\xi(T)\neq\lambda(T),Y_{\xi(T)}=Y_{\lambda(T)}]\geq 0.
\end{equation*}
In conjuction with \eqref{petit0} this implies 
\begin{multline}\label{Delta1}
\E[\Delta\|X=x,Y=y,\xi(T)\neq\lambda(T)]\\
\geq\Prob[Y_{\xi(T)}\neq Y_{\lambda(T)}\|
X=x,Y=y,\xi(T)\neq\lambda(T)].
\end{multline}

The proof of Theorem \ref{maintheorem} exploits a generalization of these observations: 
Lemma \ref{lemma1.2} of Section \ref{alignments} shows that there exist 
two optimal 
alignments $\xi$ and $\lambda$ that differ from each other exactly in 
positions $i$ where the optimal alignment of $X$ and $Y$ is not locally unique. 
In Section \ref{deviations} we show that up to negatively exponentially small 
probability in $n$ the following are true,
\begin{enumerate}
\item[i)] approximately half the points $i\in \mathbb{N}_m$
with $\xi(i)=\lambda(i)$ satisfy $X_i\neq Y_{\lambda(i)}$,
\item[ii)] approximately half the points $i\in \mathbb{N}_m$
with $\xi(i)\neq\lambda(i)$ satisfy $Y_{\xi(i)}\neq Y_{\lambda(i)}$,
\item[iii)] approximately a quarter of points $i\in\mathbb{N}_m$
with $\xi(i)\neq\lambda(i)$ satisfy $X_i\neq Y_{\xi(i)}=Y_{\lambda(i)}$,
\item[iv)] approximately a quarter of points $i\in\mathbb{N}_m$
with $\xi(i)\neq\lambda(i)$ satisfy $X_i=Y_{\xi(i)}=Y_{\lambda(i)}$,
\item[v)] for all $e_1,e_2\in\{0,1\}$ approximately a quarter of points
$i\in\mathbb{N}_m$ satisfy $X_i=e_1$ and $Y_{\xi(i)}=e_2$.
\end{enumerate}

Let $T$ be the random index in $\NN_m$ that corresponds to the entry of $X$ 
that is flipped. By the observations of Example \ref{henry's 
example1}, i)--iv) lead to the following generalization of \eqref{Delta1},
\begin{equation}\label{Delta1'}
\E[\Delta\|X=x,Y=y,\xi(T)\neq\lambda(T)]\geq\frac{1}{2}.
\end{equation}
Likewise, v) leads to the following generalization of \eqref{petit}, 
\begin{equation}\label{petit'}
\E[\Delta\|X=x,Y=y,\xi(T)=\lambda(T)]\geq 0.
\end{equation}
Of course, \eqref{Delta1'} and \eqref{petit'} hold only approximately. 
Much of the work of Section \ref{deviations} 
is devoted to overcoming these imprecisions. For now, let us work with 
the simplified assumption that \eqref{Delta1'} and \eqref{petit'} hold true 
except on a set $F^c$ of pairs $(X,Y)$ with negatively exponentially small 
probability $\Prob[F^c]=\exp(-{\mathcal O}(n))$. Equations  
\eqref{fundamental}, \eqref{Delta1'} and \eqref{petit'} then imply 
\begin{equation*}
0=\E[\Delta]\geq\frac{1}{2}\times\Prob[\xi(T)\neq\lambda(T)]+0\times\Prob[\xi(T)
=\lambda(T)]-1\times\Prob[F^c],
\end{equation*}
so that 
\begin{equation}\label{too tight}
\Prob[\xi(T)\neq\lambda(T)]\leq\exp(-{\mathcal O}(n)).
\end{equation}
When the approximate statements \eqref{Delta1'} and \eqref{petit'} are replaced 
with correct inequalities, \eqref{too tight} turns into the 
weaker claim of Theorem \ref{maintheorem}. 

The structure of the remaining sections of this paper is as follows. 
Section \ref{alignments} serves to introduce the main notation 
relevant to scoring functions, alignments and local uniqueness of 
alignments. We also discuss illustrative examples and prove two 
technical results of preliminary nature. In Section \ref{deviations} 
we introduce events defined in terms of certain empirical distributions 
and their large deviations. These events allow putting the above-made 
approximate statements i)--v) onto a rigorous footing. In Section 
\ref{ergodic} we define formally the measure-preserving map which 
flips a random entry of $X$ to its opposite value. In Lemma \ref{lem3.1} 
of that section we prove that the locations where the optimal alignment
is nonunique tend to introduce a positive bias into $\E[\Delta]$. 
Section \ref{main} finally brings all the elements together in the proof 
of Theorem \ref{maintheorem}.

%------------------------------------------------------------------------
\section{Alignments and Scores}
\label{alignments}
%------------------------------------------------------------------------
Let $(x,y)\in\{0,1\}^m\times\{0,1\}^n$ be a pair of sequences of lengths 
$m<n$ over the binary alphabet. Let us consider alignments 
\begin{equation*}
\begin{array}{ccccccccccc}
y_1&\dots&y_{\xi(1)-1}&y_{\xi(1)}&y_{\xi(1)+1}&\dots&y_{\xi(m)-1}&
y_{\xi(m)}&y_{\xi(m)+1}&\dots&y_n\\
\sqcup&\dots&\sqcup&x_1&\sqcup&\dots&\sqcup&x_m&\sqcup&\dots&\sqcup\\
\end{array}
\end{equation*}
of $x$ and $y$ that have $\delta n$ gaps in $x$, where
$m=\lfloor(1-\delta)n\rfloor$. In the above display we marked gaps with the symbol 
$\sqcup$. We identify the set of such 
alignments with the set ${\mathcal A}_{m,n}$ of order-preserving 
injections of $\NN_m$ into $\NN_n:=\{1,\dots,n\}$, that is, $\xi\in
{\mathcal A}_{m,n}$ if and only if $\xi:\NN_m\hookrightarrow\NN_n$ and 
$i<j$ implies $\xi(i)<\xi(j)$.

\begin{lemma}\label{lem1.1}
If $\delta<5/6$, then $\#{\mathcal A}_{m,n}\leq\exp\bigl(n H(\delta)\bigr)$, 
where $H(\delta)=-\bigl(\delta\ln\delta+(1-\delta)\ln(1-\delta)
\bigr)$ is the entropy function.
\end{lemma}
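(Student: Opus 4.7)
First I would identify $\#\mathcal{A}_{m,n}$ as a binomial coefficient. Any $\xi\in\mathcal{A}_{m,n}$ is an order-preserving injection $\NN_m\hookrightarrow\NN_n$, hence uniquely determined by its image $\xi(\NN_m)\subseteq\NN_n$, which may be any $m$-element subset; conversely every such subset determines one such injection by listing its elements in increasing order. Therefore $\#\mathcal{A}_{m,n}=\binom{n}{m}$, and the lemma reduces to a standard entropy-type estimate on a binomial coefficient.

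For this I would use the following one-line inequality: for any $p\in(0,1)$, the identity
\[
1 = \bigl(p + (1-p)\bigr)^n \geq \binom{n}{n-m}p^{n-m}(1-p)^m
\]
gives $\binom{n}{m}\leq p^{-(n-m)}(1-p)^{-m}$. Taking $p=\delta$ and logarithms, and using $m=\lfloor(1-\delta)n\rfloor$ so that $(1-\delta)n - m \in [0,1)$, a direct rearrangement yields
\[
\ln\binom{n}{m} \leq nH(\delta) + \bigl((1-\delta)n - m\bigr)\ln\!\tfrac{1-\delta}{\delta}.
\]
For $\delta\geq 1/2$ the log factor is $\leq 0$, so the surplus term is $\leq 0$ and the bound is immediate. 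For $\delta<1/2$ the surplus is non-negative and strictly less than $\ln\tfrac{1-\delta}{\delta}$; I would absorb it by sharpening the starting inequality via Stirling's formula, which inserts a polynomial prefactor of order $1/\sqrt{n\delta(1-\delta)}$ and cancels the $O(1)$ surplus throughout the stated range $\delta<5/6$.

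I expect the main obstacle to be precisely this last balancing. Since $H'(\delta)=\ln((1-\delta)/\delta)$ blows up as $\delta\to 0^{+}$, the cancellation between Stirling correction and rounding surplus is tightest for small $\delta$, and the explicit cutoff $5/6$ presumably emerges from a careful uniform accounting of these two error terms.
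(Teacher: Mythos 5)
Your identification $\#\mathcal{A}_{m,n}=\binom{n}{m}$ and the reduction to an entropy bound on the binomial coefficient is exactly the starting point of the paper; the paper then applies Robbins' sharp form of Stirling directly to $\binom{n}{n(1-\delta)}$ (silently treating $m$ as if it were exactly $n(1-\delta)$), while you begin with the cruder bound $\binom{n}{m}\leq\delta^{-(n-m)}(1-\delta)^{-m}$ from the binomial theorem. Your route is genuinely cleaner in the regime $\delta\geq 1/2$, where the crude bound already gives the claim for every $n$ with no Stirling needed; and you are more careful than the paper about the floor, which it ignores.

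The gap is in the $\delta<1/2$ branch. You correctly computed the surplus $\bigl((1-\delta)n-m\bigr)\ln\tfrac{1-\delta}{\delta}$, which can be as close as you like to $\ln\tfrac{1-\delta}{\delta}$ and diverges as $\delta\to 0^{+}$. Switching to Stirling replaces the exponent $nH(\delta)$ by $nH(m/n)$ and buys a multiplicative factor of order $\bigl(2\pi n\delta(1-\delta)\bigr)^{-1/2}$; but the exponential mismatch $n\bigl(H(m/n)-H(\delta)\bigr)$ is bounded by exactly the same surplus $\ln\tfrac{1-\delta}{\delta}$. For the $\tfrac12\ln\bigl(2\pi n\delta(1-\delta)\bigr)$ from the prefactor to dominate $\ln\tfrac{1-\delta}{\delta}$ you need roughly $n\gtrsim\delta^{-3}$. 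So the claim that Stirling ``cancels the $O(1)$ surplus throughout the stated range $\delta<5/6$'' is not correct uniformly in $n$; it is a statement about $n$ large relative to $1/\delta$, and your plan never performs this accounting. (To be fair, the paper's own proof sidesteps this entirely by pretending $m=n(1-\delta)$, so the surplus never appears there.)

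Your conjecture about the origin of the $5/6$ cutoff is also wrong. In the paper it has nothing to do with balancing a rounding surplus against a Stirling correction. After Robbins, the bound is
\begin{equation*}
\binom{n}{n(1-\delta)}\leq\e^{nH(\delta)}\cdot\frac{\text{(numerator $<1$)}}{\sqrt{2\pi(1-\delta)(n-m)}},
\end{equation*}
and $\delta<5/6$ is imposed solely to ensure the denominator exceeds $1$: assuming at least one gap, $n-m\geq 1$, one gets $2\pi(1-\delta)(n-m)>6(1-\delta)>1$ precisely when $1-\delta>1/6$.
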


\begin{proof}
Robbins' inequality \cite{Robbins} says that 
\begin{equation*}
\sqrt{2\pi} n^{-n+\frac{1}{12+1}}\leq n!\leq\sqrt{2\pi} 
n^{n+\frac{1}{2}}\e^{-n+\frac{1}{12n}}. 
\end{equation*}
Therefore, 
\begin{multline*}
\#{\mathcal A}_{m,n}=\binom{n}{n(1-\delta)}\\
\leq\frac{\sqrt{2\pi}n^{n+\frac{1}{2}}\e^{-n+\frac{1}{12n}}}
{\sqrt{2\pi}\bigl(n(1-\delta)\bigr)^{n(1-\delta)+\frac{1}{2}}
\e^{-n(1-\delta)+\frac{1}{12n(1-\delta)+1}}\times
\sqrt{2\pi}(n\delta)^{n\delta+\frac{1}{2}}\e^{-n\delta+
\frac{1}{12n\delta+1}}}\\
=\e^{n H(\delta)}\times\frac{\exp\left(\frac{1}{12n}-
\frac{1}{12n(1-\delta)+1}-\frac{1}{12n\delta+1}\right)}
{\sqrt{2\pi(1-\delta)(n-m)}}.
\end{multline*}
Note that the second factor converges to zero for fixed $\delta$ and 
$n\rightarrow\infty$. Moreover, the numerator is $<1$ and for 
$\delta<5/6$ the denominator is $>1$ since $2\pi(1-\delta)(n-m)>
6(1-\delta)>1$.
\end{proof}

We define a scoring function $\{0,1\}^m\times\{0,1\}^n\times 
{\mathcal A}_{m,n}\rightarrow\NN_0$ as follows, 
\begin{equation*}
S(x,y;\xi):=\sum_{i=1}^m s(x_i,y_{\xi(i)}),
\end{equation*}
where $s(0,0)=s(1,1)=1$ and $s(0,1)=s(1,0)=0$. The set of optimal 
alignments of $(x,y)$ is the set of alignments with maximum score, 
\begin{equation*}
{\mathcal OA}_{x,y}:=\left\{\xi\in{\mathcal A}_{m,n}:\, 
S(x,y;\xi)\geq S(x,y;\lambda)\;\forall\,\lambda\in{\mathcal A}_{m,n}
\right\}.
\end{equation*}
We write $S^*(x,y):=\max\{S(x,y;\xi):\,\xi\in{\mathcal A}_{m,n}\}$ 
for the maximum score, as before.

For each $i\in\NN_m$ we define the variable 
\begin{equation*}
u_i(x,y):=\begin{cases}1\quad&\text{if }\exists\,\xi,\lambda\in
{\mathcal OA}_{x,y}\text{ s.t. }\xi(i)\neq\lambda(i),\\
0&\text{otherwise}
\end{cases}
\end{equation*}
that indicates when the image of $i$ under the set of optimal 
alignments is nonunique. We say that the optimal alignment is 
{\em locally nonunique} at $i$ if $u_i(x,y)=1$. We write 
\begin{equation*}
u(x,y):=\sum_{i=1}^m u_i(x,y) 
\end{equation*}
for the number of indices where the optimal alignment is locally 
nonunique. 

The sets ${\mathcal OA}_{x,y}$ and $\{i\in\NN_m:\,
u_i(x,y)=1\}$ can be found via dynamic programming: A $m\times n$ 
matrix $\bigl(score(i,j)\bigr)$ is recursively computed, using the 
rules 
\begin{itemize}
\item[r.i)\;] $score(i,j)=-1$ for $i>j$ or $j>i+\delta n$, 
\item[r.ii)\;] $score(1,j)=s(x_1,y_j)$ for $j=1,\dots,1+\delta n$, 
\item[r.iii)\;] $score(i,j)=s(x_i,y_j)+\max\{score(i-1,k):\,k<j\}$ 
for all other $(i,j)$. 
\end{itemize}
Arguing recursively, one immediately verifies 
that 
\begin{equation*}
S^*(x,y)=\max\{score(m,j):\,j\in\NN_n\},
\end{equation*}
and furthermore that $\xi\in{\mathcal OA}_{x,y}$ if and only if the 
following conditions are satisfied:
\begin{itemize}
\item[c.i)\;] $\xi(m)\in\bigl\{j\in\NN_n:\,score(m,j)=
S^*(x,y)\bigr\}$, 
\item[c.ii)\;] $\xi(i-1)\in\bigl\{j<\xi(i):\,score(i-1,j)=
\max_{k<\xi(i)}score(i-1,k)\bigr\}$ for all $i=2,\dots,m$.\\
\end{itemize}

\begin{example}\label{example1}
Let $x=01010101$ and $y=110101101100$. Then the above 
described dynamic programming algorithm generates the matrix of Figure 
\ref{figure1}, 
%%%%%%%
\begin{figure}[h]
\begin{center}
%\psfrag{a2}{$a_2$}
\includegraphics{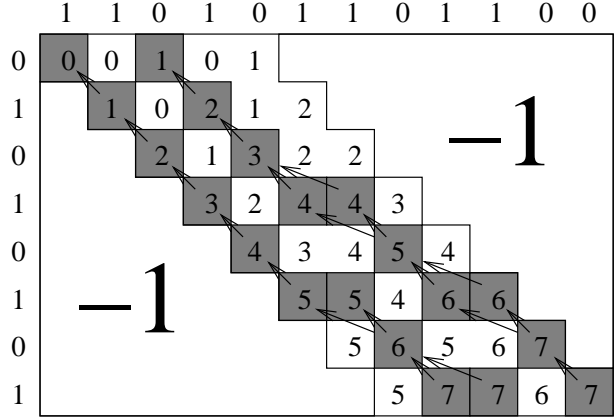}
\end{center}
\caption{The scoring matrix of Example \ref{example1}.}
\label{figure1}
\end{figure}
%%%%%%%
where it is displayed in tableau format. Optimal paths follow 
the arrows and pass through the shaded entries. The tableau is 
annotated with the generating sequences $x$ and $y$, so that the optimal 
alignments can easily be read off. The following table lists $y$ in the 
top row, followed by a complete list of optimal alignments of $x$ with $y$, 
\begin{equation*}
\begin{array}{cccccccccccc}
{\mathbf 1}&{\mathbf 1}&{\mathbf 0}&{\mathbf 1}&{\mathbf 0}&
{\mathbf 1}&{\mathbf 1}&{\mathbf 0}&{\mathbf 1}&{\mathbf 1}
&{\mathbf 0}&{\mathbf 0}\\
0&1&0&1&0&1&\sqcup&0&1&\sqcup&\sqcup&\sqcup\\
0&1&0&1&0&1&\sqcup&0&\sqcup&1&\sqcup&\sqcup\\
0&1&0&1&0&\sqcup&1&0&1&\sqcup&\sqcup&\sqcup\\
0&1&0&1&0&\sqcup&1&0&\sqcup&1&\sqcup&\sqcup\\
\sqcup&\sqcup&0&1&0&1&\sqcup&0&1&\sqcup&0&1\\
\sqcup&\sqcup&0&1&0&1&\sqcup&0&\sqcup&1&0&1\\
\sqcup&\sqcup&0&1&0&\sqcup&1&0&1&\sqcup&0&1\\
\sqcup&\sqcup&0&1&0&\sqcup&1&0&\sqcup&1&0&1\\
\end{array}
\end{equation*}
Every line of the the tableau in Figure \ref{figure1} 
contains multiple shaded entries. Therefore, $u_i(x,y)=1$ for all 
$i\in\NN_m$ and $u(x,y)=m$.
\end{example}

In the above example we ordered the optimal alignments 
from leftmost to rightmost as located within the tableau, that is, 
alignments are listed in inverse alphabetical order with respect 
to the lateness of gaps. This also provides the idea of proof for the 
following result, which shows that there exist two optimal alignments 
that differ from one another at every point where the optimal alignment 
is locally nonunique.

\begin{lemma}\label{lemma1.2}
For all $(x,y)\in\{0,1\}^m\times\{0,1\}^n$ there exist $\xi,\lambda\in
{\mathcal OA}_{x,y}$ such that $\xi(i)\neq\lambda(i)$ for exactly 
those indices $i\in\NN_m$ for which $u_i(x,y)=1$.
\end{lemma}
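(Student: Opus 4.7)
The plan is to obtain $\xi$ and $\lambda$ as the pointwise minimum and pointwise maximum, respectively, over all elements of $\mathcal{OA}_{x,y}$, after first establishing that $\mathcal{OA}_{x,y}$ is closed under the natural pointwise lattice operations. Concretely, for any two alignments $\mu,\nu\in\mathcal{A}_{m,n}$, define $(\mu\wedge\nu)(i):=\min\{\mu(i),\nu(i)\}$ and $(\mu\vee\nu)(i):=\max\{\mu(i),\nu(i)\}$. I would prove two claims: (i) both $\mu\wedge\nu$ and $\mu\vee\nu$ again belong to $\mathcal{A}_{m,n}$, and (ii) their scores satisfy the Monge-type identity
\begin{equation*}
S(x,y;\mu\wedge\nu)+S(x,y;\mu\vee\nu)=S(x,y;\mu)+S(x,y;\nu).
\end{equation*}

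Claim (i) follows immediately from the order-preserving property: for the $\wedge$ case, $\min(\mu(i),\nu(i))\leq\mu(i)<\mu(i+1)$ and the analogous bound using $\nu$ together give $(\mu\wedge\nu)(i)<(\mu\wedge\nu)(i+1)$; the argument for $\vee$ is symmetric. Claim (ii) is in fact a termwise identity, because $\{(\mu\wedge\nu)(i),(\mu\vee\nu)(i)\}=\{\mu(i),\nu(i)\}$ as multisets, so the contributions $s(x_i,y_{\cdot})$ are merely rearranged. This termwise rearrangement is the one nontrivial ingredient of the whole argument; the key point is that the scoring function is a sum over positions with no interaction between them.

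Once (i) and (ii) are in hand, applying them to $\mu,\nu\in\mathcal{OA}_{x,y}$ yields $\mu\wedge\nu,\mu\vee\nu\in\mathcal{OA}_{x,y}$: the two scores on the left of (ii) sum to $2S^*(x,y)$, yet each is bounded above by $S^*(x,y)$, forcing both to equal $S^*(x,y)$. Since $\mathcal{OA}_{x,y}$ is a finite set that is closed under pointwise $\wedge$ and $\vee$, iterating the operation produces a global minimum $\xi:=\bigwedge_{\mu\in\mathcal{OA}_{x,y}}\mu$ and a global maximum $\lambda:=\bigvee_{\mu\in\mathcal{OA}_{x,y}}\mu$, both still optimal.

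These choices conclude the proof: by construction, $\xi(i)=\min_{\mu\in\mathcal{OA}_{x,y}}\mu(i)$ and $\lambda(i)=\max_{\mu\in\mathcal{OA}_{x,y}}\mu(i)$ for every $i\in\NN_m$, so $\xi(i)=\lambda(i)$ precisely when every optimal alignment sends $i$ to the same image, i.e., precisely when $u_i(x,y)=0$; equivalently, $\xi(i)\neq\lambda(i)$ exactly at the indices where $u_i(x,y)=1$. The only real obstacle in this plan is recognizing the termwise score decomposition in step (ii), and this in turn depends critically on the assumption that gaps occur in $X$ only, so that no position can be ``shifted through'' another without rearranging the set of matched pairs; the rest of the argument is bookkeeping.
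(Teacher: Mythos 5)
Your proof is correct, and it takes a genuinely different route from the paper's. The paper defines $\xi(i):=\min_{\psi\in\mathcal{OA}_{x,y}}\psi(i)$ and $\lambda(i):=\max_{\psi\in\mathcal{OA}_{x,y}}\psi(i)$ directly and then argues by contradiction that these belong to $\mathcal{OA}_{x,y}$, leaning heavily on the dynamic programming recursion (conditions c.i) and c.ii)) that characterizes optimal alignments. You instead establish the structural fact that $\mathcal{OA}_{x,y}$ is closed under pointwise $\wedge$ and $\vee$, via the termwise Monge-type identity $S(x,y;\mu\wedge\nu)+S(x,y;\mu\vee\nu)=S(x,y;\mu)+S(x,y;\nu)$; optimality of the lattice infimum and supremum then follows by a two-line averaging argument. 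The two approaches produce the same $\xi$ and $\lambda$, but the reasoning is quite different in flavor: the paper's argument is local and recursion-dependent, while yours exposes the underlying lattice structure of the set of optimal alignments, which is a cleaner and more portable fact. Your observation that the termwise identity hinges on the score being an uncoupled sum $\sum_i s(x_i,y_{\xi(i)})$ over all positions $i$ of $X$ (which in turn uses the gaps-in-$X$-only convention, so that $\xi$ is a total function on $\NN_m$) is the right thing to flag; the one small imprecision is that strict monotonicity of $\mu\wedge\nu$ deserves a full sentence (from $\min(\mu(i),\nu(i))<\mu(i+1)$ and $\min(\mu(i),\nu(i))<\nu(i+1)$ conclude $\min(\mu(i),\nu(i))<\min(\mu(i+1),\nu(i+1))$), but the idea is unambiguously there.
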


\begin{proof}
The claim is clearly true if we can prove that $\xi,\lambda\in
{\mathcal OA}_{x,y}$, where 
\begin{align*}
\xi(i)&:=\min\left\{\psi(i):\,\psi\in{\mathcal OA}_{x,y}\right\}
\quad\forall\,i\in\NN_m,\\
\lambda(i)&:=\max\left\{\psi(i):\,\psi\in{\mathcal OA}_{x,y}\right\}
\quad\forall\,i\in\NN_m.
\end{align*}
If $\lambda\notin{\mathcal OA}_{x,y}$ then there exists an 
index $i\in\NN_m\setminus\{1\}$ such that 
$\widehat{\psi}(i-1)<\lambda(i-1)$ for all 
$\widehat{\psi}\in{\mathcal OA}_{x,y}$ such that $\widehat{\psi}
(i)=\lambda(i)$. On the other hand, there exists $\widehat{\lambda}\in
{\mathcal OA}_{x,y}$ such that $\widehat{\lambda}(i-1)=\lambda(i-1)$, 
and therefore it is necessarily the case that $\widehat{\lambda}(i)
<\widehat{\psi}(i)=\lambda(i)$. But $\widehat{\lambda}$ and 
$\widehat{\psi}$ satisfy condition c.ii), that is, 
\begin{align*}
\widehat{\lambda}(i-1)&\in\Bigl\{j<\widehat{\lambda}(i):\,score(i-1,j)
=\max_{k<\widehat{\lambda}(i)}score(i-1,k)\Bigr\},\\
\widehat{\psi}(i-1)&\in\Bigl\{j<\widehat{\psi}(i):\,score(i-1,j)
=\max_{k<\widehat{\psi}(i)}score(i-1,k)\Bigr\}.
\end{align*}
Therefore, either $\max_{k<\widehat{\lambda}(i)}score(i-1,k)
=\max_{k<\widehat{\psi}(i)}score(i-1,k)$ and then there exists 
$\psi\in{\mathcal OA}_{x,y}$ such that $\psi(i)=\widehat{\psi}(i)=
\lambda(i)$ and $\psi(i-1)=\widehat{\lambda}(i-1)=\lambda(i-1)$, or else 
$\max_{k<\widehat{\lambda}(i)}score(i-1,k)<\max_{k<\widehat{\psi}
(i)}score(i-1,k)$ and then $\widehat{\psi}(i-1)>\widehat{\lambda}
(i-1)=\lambda(i-1)$. In either case we have a contradiction, and this 
shows that $\lambda\in{\mathcal OA}_{x,y}$ indeed. The proof that 
$\xi\in{\mathcal OA}_{x,y}$ is analogous. 
\end{proof}

Note that the alignments $\xi$ and $\lambda$ constructed in the proof 
of Lemma \ref{lemma1.2} are uniquely determined by $(x,y)$. Furthermore, 
they satisfy the relation $\xi\leq\lambda$ which is defined by $\xi(i)\leq 
\lambda(i)$ for all $i\in\NN_m$.

%------------------------------------------------------------------------
\section{Large Deviations of Some Empirical Distributions}
\label{deviations}
%------------------------------------------------------------------------
In this section we establish a rigorous version of the approximate 
inequalities \eqref{Delta1'},\eqref{petit'} and statements 
i)--v) of Section \ref{problem setting}. 
Recall that $X=X_1\dots X_m$ and $Y=Y_1\dots Y_n$ are two independent 
random sequences that consist of i.i.d.\ 
standard Bernoulli variables $X_i,Y_j\sim{\mathscr B}(1/2)$ 
defined on some probability space $(\Omega,{\mathscr F},\Prob)$. 
We write $U_i$ and $U$ for the random variables $u_i(X,Y)$ and 
$u(X,Y)$ respectively. Furthermore, we think of $\delta\in(0,1)$ 
as a fixed gap proportion that relates $m$ to $n$ via 
$m=\lfloor(1-\delta)n\rfloor$. 

For $\xi\in{\mathcal A}_{m,n}$ fixed and $\omega\in\Omega$ let 
$\widehat{{\mathscr D}}_{\xi}(\omega)$ be the empirical distribution 
of $(X_i(\omega),Y_{\xi(i)}(\omega))$ over $i\in\NN_m$, 
i.e., the distribution of $(X_T(\omega),Y_{\xi(T)}(\omega))$ 
when $T\sim{\mathscr U}(\NN_m)$ is a random index with uniform 
distribution on $\NN_m$. Yet another way to define this distribution 
is to require that for all $(e_1,e_2)\in\{0,1\}^2$, 
\begin{equation*}
\Prob_{\widehat{{\mathscr D}}_{\xi}(\omega)}\bigl[(e_1,e_2)\bigr]
=\frac{1}{m}\times\#\Bigl\{i\in\NN_m:\,X_i(\omega)=e_1,\,
Y_{\xi(i)}(\omega)=e_2\Bigr\}.
\end{equation*}

\begin{example}\label{henry's example2}
Let $x,y$ and $\xi$ be chosen as in Example \ref{henry's example1}. 
If $\omega\in\Omega$ is chosen such that $X(\omega)=x$ and 
$Y(\omega)=y$ then 
\begin{equation*}
\Prob_{\widehat{{\mathscr D}}_{\xi}(\omega)}\bigl[(0,0)\bigr]
=\frac{1}{6},\;
\Prob_{\widehat{{\mathscr D}}_{\xi}(\omega)}\bigl[(0,1)\bigr]
=\frac{2}{6},\;
\Prob_{\widehat{{\mathscr D}}_{\xi}(\omega)}\bigl[(1,0)\bigr]
=\frac{1}{6},\;
\Prob_{\widehat{{\mathscr D}}_{\xi}(\omega)}\bigl[(1,1)\bigr]
=\frac{2}{6}.
\end{equation*}
Note that $\widehat{{\mathscr D}}_{\xi}$ only depends on $x$ and $y$.
\end{example}

Let $E_{\xi}$ be the event that 
\begin{equation}\label{round10.1}
\max_{(e_1,e_2)\in\{0,1\}^2}\left|\Prob_{\widehat{{\mathscr D}}_{\xi}
(\omega)}\bigl[(e_1,e_2)\bigr]-1/4\right|<
\sqrt{\frac{9H(\delta)}{4(1-\delta)}}=:\epsilon_1(\delta),
\end{equation}
in other words,
\begin{equation*}
E_{\xi}:=\left\{\omega\in\Omega:\,\left\|\widehat{{\mathscr D}}_{\xi}
(\omega)-{\mathscr B}(1/2)\otimes{\mathscr B}(1/2)\right\|<\epsilon_1
(\delta)\right\}.
\end{equation*}
Let us furthermore define the event 
\begin{equation*}
E_{m,n}:=\bigcap_{\xi\in{\mathcal A}_{m,n}}E_{\xi}.
\end{equation*}
In a similar vein we define empirical distributions and events 
relating to $(X_i,Y_{\xi(i)},Y_{\lambda(i)})$ as follows: Let 
$\varepsilon>0$ and $\xi,\lambda\in{\mathcal A}_{m,n}$ 
be fixed such that $\xi\leq\lambda$ and $d(\xi,\lambda):=\#\{i:\,\xi(i)
\neq\lambda(i)\}\geq m\varepsilon$. For all $i$ such that 
$\xi(i)=\lambda(i)$ we define the random variables 
\begin{equation*}
R^{\xi,\lambda}_i:=\begin{cases}1\quad&\text{if }X_i\neq Y_{\xi(i)}\\
-1&\text{if }X_i=Y_{\xi(i)}.
\end{cases}
\end{equation*}
Likewise, for all $i$ such that $\xi(i)\neq
\lambda(i)$ we define the random variables 
\begin{equation*}
R^{\xi,\lambda}_i:=\begin{cases}0\quad&\text{if }Y_{\xi(i)}\neq 
Y_{\lambda(i)},\\
1\quad&\text{if }X_i\neq Y_{\xi(i)}=Y_{\lambda(i)},\\
-1\quad&\text{if }X_i=Y_{\xi(i)}=Y_{\lambda(i)}.
\end{cases}
\end{equation*}
We now introduce the following notation:\\

\begin{itemize}
\item $\widehat{{\mathscr L}}^{agree}_{\xi,\lambda}(\omega)$,  
$\widehat{{\mathscr L}}^{disag}_{\xi,\lambda}(\omega)$ and 
$\widehat{{\mathscr L}}^{unif}_{\xi,\lambda}(\omega)$ are the 
empirical distributions of $R^{\xi,\lambda}_{i}(\omega)$ over 
$\{i\in\NN_m:\,\xi(i)=\lambda(i)\}$, $\{i\in\NN_m:\,\xi(i)\neq\lambda(i)\}$ 
and $i\in\NN_m$ respectively, 
\item ${\mathscr J}^{agree}$ is the distribution on 
$\{-1,1\}$ defined by $\Prob_{{\mathscr J}^{agree}}[-1]=1/2=
\Prob_{{\mathscr J}^{agree}}[1]$, and 
${\mathscr J}^{disag}={\mathscr J}^{unif}$ the distribution 
on $\{-1,0,1\}$ defined by 
$\Prob_{{\mathscr J}^{disag}}[-1]=1/4=\Prob_{{\mathscr J}^{disag}}[1]$ 
and $\Prob_{{\mathscr J}^{disag}}[0]=1/2$, 
\item parameters $\epsilon_2$, $\epsilon_3$ and $\epsilon_4$ are 
determined in terms of $\delta$ and $\varepsilon$ via  the relations 
\begin{align*}
\epsilon_2(\delta,\varepsilon)&:=\sqrt{\frac{3H(\delta)}{2(1-\delta)
\varepsilon}},
&\epsilon_3(\delta,\varepsilon)&:=\sqrt{\frac{27H(\delta)}{8(1-\delta)
\varepsilon}},
&\epsilon_4(\delta,\varepsilon)&:=\sqrt{\frac{3H(\delta)}{2(1-\delta)
(1-\varepsilon)}}.
\end{align*}
\end{itemize}
With this notation, we define the following events, 
\begin{align*}
F_{\xi,\lambda}^{\varepsilon}&:=\left\{\omega\in\Omega:\,
\left\|\widehat{{\mathscr L}}^{agree}_{\xi,\lambda}-{\mathscr J}^{agree}
\right\|<\epsilon_2(\delta,\varepsilon)\right\},\\
G_{\xi,\lambda}^{\varepsilon}&:=\left\{\omega\in\Omega:\,
\left\|\widehat{{\mathscr L}}^{disag}_{\xi,\lambda}-{\mathscr J}^{disag}
\right\|<\epsilon_3(\delta,\varepsilon)\right\},\\
H_{\xi,\lambda}^{\varepsilon}&:=\left\{\omega\in\Omega:\,
\left\|\widehat{{\mathscr L}}^{unif}_{\xi,\lambda}-{\mathscr J}^{unif}
\right\|<\epsilon_4(\delta,\varepsilon)+2\varepsilon\right\},\\
F_{m,n,\varepsilon}&:=\bigcap_{\{(\xi,\lambda):\xi\leq\lambda, 
m\varepsilon\leq d(\xi,\lambda)\leq m(1-\varepsilon)\}}\hspace{-1.7cm}
\left(F_{\xi,\lambda}^{\varepsilon}
\cap G_{\xi,\lambda}^{\varepsilon}\right)\quad\cap
\bigcap_{\{(\xi,\lambda): \xi\leq\lambda, m(1-\varepsilon)<d(\xi,\lambda)\}}
\hspace{-.8cm}
H_{\xi,\lambda}^{\varepsilon}.
\end{align*} 

\begin{lemma}\label{lem2.1} For all $\delta<5/6$ and $\varepsilon>0$,  
\begin{itemize}
\item[i)\;] $\Prob\bigl[E_{m,n}^c\bigr]\leq 8\e^{-nH(\delta)}$,
\item[ii)\;] $\Prob\bigl[F_{m,n,\varepsilon}^c\bigr]\leq 10\e^{-nH(
\delta)}$.
\end{itemize}
\end{lemma}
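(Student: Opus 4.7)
The plan is a Chernoff--Hoeffding concentration argument applied alignment-by-alignment (part i) and pair-by-pair (part ii), combined with a union bound whose combinatorial cost is controlled by Lemma \ref{lem1.1} via $\#\mathcal{A}_{m,n}\le e^{nH(\delta)}$. The numerical constants hidden in $\epsilon_1,\epsilon_2,\epsilon_3,\epsilon_4$ are precisely those for which the per-alignment (respectively per-pair) tail rate dominates the entropic cost of the union.

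For part i), fix $\xi\in\mathcal{A}_{m,n}$. Since $\xi$ is an injection, the coordinates $Y_{\xi(1)},\dots,Y_{\xi(m)}$ are pairwise distinct, so $(X_i,Y_{\xi(i)})_{i=1}^m$ is an i.i.d.\ sample from $\mathscr{B}(1/2)\otimes\mathscr{B}(1/2)$ on $\{0,1\}^2$. Applying Hoeffding's inequality to each of the four cell indicators and union-bounding over them gives $\Prob[E_\xi^c]\le 8e^{-2m\epsilon_1^2}$. With $m\ge(1-\delta)n-1$ and the choice $\epsilon_1^2=9H(\delta)/(4(1-\delta))$, this exponent is at least $(9/2)nH(\delta)$ up to a vanishing boundary correction, so
\begin{equation*}
\Prob[E_{m,n}^c]\le\#\mathcal{A}_{m,n}\cdot 8e^{-2m\epsilon_1^2}\le 8e^{-nH(\delta)}
\end{equation*}
for $n$ large, with considerable slack.

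For part ii), the subtlety is that, for a fixed pair $\xi\le\lambda$, the variables $R_i^{\xi,\lambda}$ are \emph{not} mutually independent, since distinct indices $i$ may share a $Y$-coordinate even though $\xi$ and $\lambda$ individually are injective. The key move is to condition on $Y$: given $Y$ each $R_i^{\xi,\lambda}$ is a function of $X_i$ alone, and the mutual independence of the $X_i$'s renders the $R_i^{\xi,\lambda}$ conditionally independent. On $I_0:=\{i:\xi(i)=\lambda(i)\}$ the conditional law is exactly $\mathscr{J}^{agree}$, so a conditional Hoeffding bound yields $\Prob[F_{\xi,\lambda}^{\varepsilon c}]\le 4e^{-2|I_0|\epsilon_2^2}$; since $|I_0|\ge m\varepsilon$ in the range $d(\xi,\lambda)\le m(1-\varepsilon)$, the choice of $\epsilon_2$ makes this exponent sufficiently large. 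On $I_1:=\NN_m\setminus I_0$ the conditional law of $R_i^{\xi,\lambda}$ is $\delta_0$ when $Y_{\xi(i)}\ne Y_{\lambda(i)}$ and uniform on $\{\pm1\}$ otherwise, so the control of $G_{\xi,\lambda}^{\varepsilon c}$ decouples into (a) showing that $n_0:=\#\{i\in I_1:Y_{\xi(i)}\ne Y_{\lambda(i)}\}$ concentrates around $|I_1|/2$, and (b) showing that the $\pm1$-counts from $I_1\setminus\{i:R_i=0\}$ concentrate conditional on $Y$. Step (b) is another application of Hoeffding conditional on $Y$. Step (a) is a concentration statement for a sum of dependent indicators, handled by McDiarmid's bounded-differences inequality together with the observation that since $\xi,\lambda$ are injections each coordinate $Y_j$ appears in at most two of the summand indicators; equivalently, the dependence graph on $I_1$ has maximum degree at most two and may be 3-colored into independent subsystems to which standard Hoeffding applies directly. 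The event $H_{\xi,\lambda}^\varepsilon$ is handled analogously, with the additive slack $2\varepsilon$ in its definition absorbing the at most $m\varepsilon$ indices from $I_0$ that deterministically place their mass at $\pm1$ rather than at $0$, and $\epsilon_4$ tuned so that the statistical fluctuations over the dominant $I_1$ portion sit on the $e^{-nH(\delta)}$ scale. A union bound over the at most $e^{2nH(\delta)}$ admissible pairs then produces $\Prob[F_{m,n,\varepsilon}^c]\le 10e^{-nH(\delta)}$.

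I expect the main obstacle to be step (a): the indicators $\mathbf{1}[Y_{\xi(i)}\ne Y_{\lambda(i)}]$ are dependent, so Hoeffding cannot be applied naively, and one must exploit the bounded-overlap structure that injectivity of $\xi,\lambda$ provides. Once that observation is in place, the rest is a mechanical verification that the prefactors $9/4,\,3/2,\,27/8,\,3/2$ appearing inside $\epsilon_1,\dots,\epsilon_4$ are large enough to dominate the $2nH(\delta)$ (respectively $nH(\delta)$) entropic cost of the union bound, together with some arithmetic to confirm that the multiplicative constants $8$ and $10$ are respected.
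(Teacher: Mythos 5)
Part i) is sound and matches the paper: for fixed $\xi$, injectivity makes $(X_i,Y_{\xi(i)})_{i\le m}$ an i.i.d.\ sample, Hoeffding (or, as the paper does, Azuma on the partial-sum martingale) gives the per-$\xi$ tail, and Lemma~\ref{lem1.1} pays for the union.

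For part ii) you are solving a difficulty that isn't actually there, and in doing so you miss the argument's key structural observation. You claim that, because distinct $i$ may share a $Y$-coordinate, the variables $R_i^{\xi,\lambda}$ for $i\in I_1:=\{i:\xi(i)\neq\lambda(i)\}$ ``are \emph{not} mutually independent,'' and you then route around this via conditioning on $Y$, McDiarmid, and $3$-colouring of a degree-$\le 2$ dependency graph. But the $R_i^{\xi,\lambda}$ for $i\in I_1$ \emph{are} i.i.d.\ with law $\mathscr{J}^{disag}$, and the paper proves this by induction using the monotonicity structure $\xi\le\lambda$: for $I\subset I_1$ and $i_{\max}=\max I$, both $X_{i_{\max}}$ and $Y_{\lambda(i_{\max})}$ are fresh relative to $(X_j,Y_{\xi(j)},Y_{\lambda(j)})_{j\in I,j<i_{\max}}$ (since $\xi,\lambda$ are increasing and $\xi(j)\le\xi(i_{\max})<\lambda(i_{\max})$, $\lambda(j)<\lambda(i_{\max})$), while $Y_{\xi(i_{\max})}$ may overlap --- but conditionally on \emph{any} value of $Y_{\xi(i_{\max})}$ the freshness of $X_{i_{\max}}$ and $Y_{\lambda(i_{\max})}$ makes the conditional law of $R_{i_{\max}}^{\xi,\lambda}$ exactly $\mathscr{J}^{disag}$. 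So $R_{i_{\max}}^{\xi,\lambda}$ is independent of $\sigma\bigl((X_j,Y_{\xi(j)},Y_{\lambda(j)})_{j\in I,j<i_{\max}}\bigr)$, and the full i.i.d.\ claim follows by induction. (Your confusion arises because you condition on all of $Y$, which \emph{does} freeze the $Y_{\xi(i)}=Y_{\lambda(i)}$ pattern and creates dependence in the count $n_0$; but unconditionally the $R_i$'s decouple.) Once the i.i.d.\ fact is in hand, the concentration for $G_{\xi,\lambda}$ and $H_{\xi,\lambda}$ is a direct Azuma/Hoeffding bound exactly as in part i), with no need for dependency graphs.

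A secondary issue: the constants $\epsilon_3,\epsilon_4$ are calibrated for the i.i.d.\ tail rate $\exp\bigl(-\tfrac{8}{9}\,d(\xi,\lambda)\epsilon^2\bigr)$. Your McDiarmid / $3$-colouring route degrades this rate by a constant factor (McDiarmid over $\le 2|I_1|$ coordinates of Lipschitz constant $2$ gives roughly $\exp(-t^2/(4|I_1|))$ rather than $\exp(-\tfrac{8}{9}|I_1|\epsilon^2)$ at $t=|I_1|\epsilon$; splitting into $3$ colour classes divides the effective sample size by $3$), so it is not automatic that $27/8$ and $3/2$ inside $\epsilon_3,\epsilon_4$ still dominate the entropic cost $2nH(\delta)$ of the pair union bound. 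You assert ``tuned so that\ldots'' without checking this; on your route those prefactors would likely need to be increased, changing the statement of the lemma. The paper avoids all of this by establishing the i.i.d.\ structure directly.
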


\begin{proof}
The Azuma-Hoeffding Theorem \cite{azuma,hoeffding} says that 
if $(V_0,\dots,V_m)$ is a martingale with $V_0\equiv 0$ and 
$\Prob[|V_k-V_{k-1}|\leq a]=1$ for all $k\in\NN_m$ then 
\begin{equation*}
\Prob[V_m\geq mb]\leq\exp\left(-\frac{m b^2}{2 a^2}\right)
\end{equation*} 
for all $b>0$. For $\xi\in{\mathcal A}_{m,n}$ and $(e_1,e_2)
\in\{0,1\}^2$ fixed let 
\begin{equation*}
Z_i(\omega):=\begin{cases}1\quad&\text{if }\bigl(X_{i}(\omega),
Y_{\xi(i)}(\omega)\bigr)=(e_1,e_2),\\
0\quad&\text{otherwise}.
\end{cases}
\end{equation*}
Then $Z_i$ $(i\in\NN_m$ are i.i.d.\ random variables with expectation 
$\E[Z_i]=1/4$. If we set $V_0:=0$ and 
\begin{equation*}
V_k:=\sum_{i=1}^k\bigl(Z_i-\E[Z_i]\bigr)\qquad(k\in\NN_m),
\end{equation*}
then $(V_0,\dots,V_m)$ is a martingale with $|V_k-V_{k-1}|\leq 
3/4$ for all $k$. By the Azuma-Hoeffding Theorem, we have 
\begin{equation*}
\Prob\left[\frac{1}{m}\sum_{i=1}^m Z_i-\frac{1}{4}
\geq\epsilon_1(\delta)\right]
=\Prob\left[\frac{1}{m}V_m\geq\epsilon_1(\delta)\right]
\leq\exp\left(-\frac{8m\epsilon_1^2
(\delta)}{9}\right)\leq\e^{-2nH(\delta)}.
\end{equation*}
Applying the same reasoning to the martingale $(-V_0,\dots,
-V_m)$, we find 
\begin{equation*}
\Prob\left[\frac{1}{m}\sum_{i=1}^m Z_i-\frac{1}{4}
\leq -\epsilon_1(\delta)\right]\leq\e^{-2nH(\delta)},
\end{equation*}
so that 
\begin{equation*}
\Prob\left[\left|
\Prob_{\widehat{{\mathscr D}}_{\xi}}\bigl[(e_1,e_2)\bigr]
-\frac{1}{4}\right|\geq\epsilon_1(\delta)\right]\leq 2\e^{-2nH(\delta)}.
\end{equation*}
Since this is true for all $(e_1,e_2)\in\{0,1\}^2$, simple 
union bounds show that 
\begin{equation*}
\Prob\left[E_{\xi}^c\right]\leq 8\e^{-2nH(\delta)}
\end{equation*}
and 
\begin{equation*}
\Prob\left[E_{m,n}^c\right]=\Prob\left[\bigcup_{\xi\in{\mathcal 
A}_{m,n}}E_{\xi}^c\right]\leq\#{\mathcal A}_{m,n}\times 
8\e^{-2nH(\delta)}\stackrel{Lem \ref{lem1.1}}{\leq}
8\e^{-nH(\delta)}.
\end{equation*}

ii)\; The proof of the second part is similar: Let $(\xi,\lambda)$ 
be such that $\xi\leq\lambda$ and $d(\xi,\lambda)\geq m\varepsilon$. For 
all $i$ such that $\xi(i)\neq\lambda(i)$ we have $\xi(i)<\lambda(i)$. 
For $e\in\{-1,0,1\}$ fixed let 
\begin{equation*}
Z_i:=\begin{cases}1\quad&\text{if }R_i^{\xi,\lambda}=e,\\
0\quad&\text{otherwise}
\end{cases}
\qquad(i\in\{k:\,\xi(k)\neq\lambda(k)\}.
\end{equation*}
Then we have 
\begin{equation*}
\E[Z_i]=\begin{cases}\frac{1}{4}\quad&\text{if }e\in\{-1,1\},\\
\frac{1}{2}\quad&\text{if }e=0.,
\end{cases}
\end{equation*}
so that $|Z_i-\E[Z_i]|\leq 3/4$ in all three cases. 
Furthermore, the random variables 
\begin{equation*}
(Z_i-E[Z_i]),\qquad(i\in\{k:\,\xi(k)\neq\lambda(k)\})
\end{equation*}
are i.i.d.\ 
with distribution ${\mathscr J}^{disag}$. This is seen by induction, 
using the observation that for all index sets 
\begin{equation*}
I\subset\{k:\,\xi(k)\neq\lambda(k)\},
\end{equation*}
if $i_{\max}=\max I$ then $X_{i_{\max}}$ and 
$Y_{\lambda(i_{\max})}$ do not appear in any of the expressions 
$(X_i,Y_{\xi(i)}, Y_{\lambda(i)})$ $(i\in I\setminus\{i_{\max}\})$, 
so that independently of the value of $Y_{\xi(i_{\max})}$ 
(which could have appeared in the above expressions at most once 
as $Y_{\lambda(i)}$), we have 
\begin{multline*}
\Prob\left[Y_{\lambda(i_{\max})}\neq Y_{\xi(i_{\max})}\right]
=\frac{1}{2},\quad\Prob\left[X_{i_{\max}}\neq Y_{\xi(i_{\max})}
=Y_{\lambda(i_{\max})}\right]=\frac{1}{4},\\
\quad\text{and}\quad
\Prob\left[X_{i_{\max}}=Y_{\xi(i_{\max})}=Y_{\lambda(i_{\max})}
\right]=\frac{1}{4}.
\end{multline*}
We define $V_0\equiv 0$ and for $k\in\NN_{d(\xi,\lambda)}$, 
$V_k:=V_{k-1}+Z_i-\E[Z_i]$, where 
\begin{equation*}
i=\min\left\{l\in\NN_m:\,\#\left\{j\leq l:\,
\xi(j)\neq\lambda(j)\right\}=k\;\right\}.
\end{equation*}
Then $(V_0,\dots,V_{d(\xi,\lambda)})$ is a martingale, and 
arguing as above by ways of the Azuma-Hoeffding Theorem, we find 
\begin{multline*}
\Prob\left[\left|
\Prob_{\widehat{{\mathscr L}}_{\xi,\lambda}^{disag}}[e]
-\Prob_{{\mathscr J}^{disag}}[e]\right|\geq\epsilon_3(\delta,
\varepsilon)\right]\\
=\Prob\left[\left|\frac{1}{d(\xi,\eta)}V_{d(\xi,\eta)}\right|
\geq\epsilon_3(\delta,\varepsilon)\right]\leq 2\exp
\left(-\frac{8 d(\xi,\lambda)\epsilon_3^2(\delta,\varepsilon)}
{9}\right)\leq2\e^{-3nH(\delta)}.
\end{multline*}
Since this holds for all $e\in\{-1,0,1\}$, we have 
\begin{equation}\label{star.1}
\Prob\left[G_{\xi,\lambda}^c\right]\leq 6\e^{-3nH(\delta)}
\end{equation}
whenever $d(\xi,\lambda)\geq m\varepsilon$ and $\xi\leq\lambda$. 

If it is even the case that
$d(\xi,\lambda)>m(1-\varepsilon)$, then we find 
\begin{equation*}
\Prob\left[\left|
\Prob_{\widehat{{\mathscr L}}_{\xi,\lambda}^{disag}}[e]
-\Prob_{{\mathscr J}^{disag}}[e]\right|\geq\epsilon_4(\delta,\varepsilon)
\right]\leq 2\exp\left(-\frac{8 d(\xi,\lambda)\epsilon_4^2
(\delta,\varepsilon)}{9}\right)\leq2\e^{-3nH(\delta)},
\end{equation*}
and also 
\begin{equation*}
\left|\Prob_{\widehat{{\mathscr L}}_{\xi,\lambda}^{unif}}[e]
-\Prob_{\widehat{{\mathscr L}}_{\xi,\lambda}^{disag}}[e]\right|
\leq 2\varepsilon.
\end{equation*}
Therefore, 
\begin{multline*}
\Prob\left[\left|\Prob_{\widehat{{\mathscr L}}_{\xi,\lambda}^{unif}}[e]
-\Prob_{{\mathscr J}^{unif}}[e]\right|\geq
\epsilon_4(\delta,\varepsilon)+2 \varepsilon\right]\\
\leq
\Prob\left[\left|\Prob_{\widehat{{\mathscr L}}_{\xi,\lambda}^{disag}}[e]
-\Prob_{{\mathscr J}^{disag}}[e]\right|\geq\epsilon_4(\delta,\varepsilon)\right]
\leq 2\e^{-3nH(\delta)}.
\end{multline*}
Since this holds for all $e\in\{-1,0,1\}$, we have 
\begin{equation}\label{star.2}
\Prob\left[H_{\xi,\lambda}^c\right]\leq 6\e^{-3nH(\delta)}
\end{equation}
whenever $d(\xi,\lambda)>m(1-\varepsilon)$ and $\xi\leq\lambda$.

Next, let $\xi\leq\lambda$ be such that $d(\xi,\lambda)\leq
m(1-\varepsilon)$, and for $e\in\{-1,1\}$ fixed let 
\begin{equation*}
Z_i:=\begin{cases}1\quad\text{if }R^{\xi,\lambda}_i=e,\\
0\quad\text{otherwise}
\end{cases}
\qquad(i\in\{k:\,\xi(k)=\lambda(k)\}).
\end{equation*}
Then $\E[Z_i]=1/2$ 
so that $|Z_i-\E[Z_i]|=1/2$, and 
\begin{equation*}
(Z_i-E[Z_i]),\qquad(i\in\{k:\,\xi(k)=\lambda(k)\})
\end{equation*}
are i.i.d.\ random variables with distribution ${\mathscr J}^{agree}$. 
Let $V_0:= 0$, and for $k\in\NN_{m-d(\xi,\lambda)}$, 
$V_k:=V_{k-1}+Z_i-\E[Z_i]$, where 
\begin{equation*}
i=\min\left\{l\in\NN:\,\#\left\{j\leq l:\,\xi(j)=\lambda(j)\right\}
=k\;\right\}.
\end{equation*}
Then $(V_0,\dots,V_{m-d(\xi,\lambda)})$ is a martingale and the 
large deviations argument from above shows that 
\begin{multline*}
\Prob\left[\left|
\Prob_{\widehat{{\mathscr L}}_{\xi,\lambda}^{agree}}[e]
-\Prob_{{\mathscr J}^{agree}}[e]\right|\geq\epsilon_2(\delta,\varepsilon)
\right]\\
=\Prob\left[\left|\frac{1}{m-d(\xi,\lambda)}V_{m-d(\xi,\lambda)}\right|
\geq\epsilon_2(\delta,\varepsilon)\right]
\leq 2\e^{-2(m-d(\xi,\lambda))\epsilon_2^2}
\leq 2\e^{-3nH(\delta)}.
\end{multline*}
Since this holds for both $e\in\{1,-1\}$, we find 
\begin{equation}\label{star.3}
\Prob\left[F_{\xi,\lambda}^c\right]\leq 4\e^{-3nH(\delta)}
\end{equation}
whenever $d(\xi,\lambda)\leq m(1-\varepsilon)$ and $\xi\leq\lambda$. 

Finally, the combination of equations \eqref{star.1}, \eqref{star.2}, 
\eqref{star.3} and Lemma \ref{lem1.1} shows that 
\begin{align*}
\Prob\left[F_{m,n}^c\right]&\leq\sum_{\{(\xi,\lambda):
\xi\leq\lambda, m\varepsilon\leq d(\xi,\lambda)\leq m(1-\varepsilon)\}}
\hspace{-1.8cm}\left(\Prob\left[F_{\xi,\lambda}^c\right]+
\Prob\left[G_{\xi,\lambda}^c\right]\right)+\sum_{\{(\xi,\lambda):\xi\leq
\lambda, m(1-\varepsilon)< d(\xi,\lambda)\}}
\hspace{-1.4cm}\Prob\left[H_{\xi,\lambda}^c\right]\\
&\leq\left({\mathcal A}_{m,n}\right)^2\times\left(10\e^{-3nH(\delta)}
\right)\\
&\leq 10\e^{-nH(\delta)}.
\end{align*}
\end{proof}

%------------------------------------------------------------------------
\section{An Ergodic Map}
\label{ergodic}
%------------------------------------------------------------------------
Let us now introduce an ergodic map as follows: Let $T\sim{\mathscr 
U}(\NN_m)$ be a uniform random variable on $\NN_m$. By Kolmogorov's 
theorem we may assume without loss of generality that $(\Omega,
{\mathscr F},\Prob)$ is extended so that $T$ is defined on $\Omega$ 
and independent of the $X_i$ and $Y_j$. Let us define new
random sequences $\tilde{X}=\tilde{X}_1\dots\tilde{X}_m$ and 
$\tilde{Y}=\tilde{Y}_1\dots\tilde{Y}_n$ by setting $\tilde{Y}:=
Y$, $\tilde{X}_i:=X_i$ for all $i\neq T$ and 
\begin{equation*}
\tilde{X}_T:=X_T+1\mod 2.
\end{equation*}
In other words, $(\tilde{X},\tilde{Y})$ is obtained from $(X,Y)$ 
by flipping one random bit of $X$ and keeping all other entries 
of $X$ and $Y$ unchanged. The map $(X,Y)\mapsto(\tilde{X},\tilde{Y})$ 
is measure-preserving, since $\tilde{X}$ and $\tilde{Y}$ again 
consist of i.i.d.\ standard Bernoulli variables. Therefore, 
\begin{equation}\label{3.1}
\E[\Delta]=0
\end{equation}
where $\Delta:=S^*(\tilde{X},\tilde{Y})-S^*(X,Y)$. The construction 
in the proof of Lemma \ref{lemma1.2} shows that there exists a 
$\sigma(X,Y)$-measurable map 
\begin{align*}
(\Xi,\Lambda):\,\Omega&\rightarrow{\mathcal A}_{m,n}\times{\mathcal 
A}_{m,n},\\
\omega&\mapsto\bigl(\Xi_{\omega},\Lambda_{\omega}\bigr)
\end{align*}
such that for all $\omega\in\Omega$, $\Xi_{\omega}\leq
\Lambda_{\omega}$ and 
\begin{equation*}
\left\{i:\,\Xi_{\omega}(i)\neq\Lambda_{\omega}(i)\right\}
=\left\{i:\,U_i(\omega)=1\right\}.
\end{equation*}
Furthermore, $X$ and $Y$ define the $\sigma(X,Y)$-measurable events 
$E_{m,n}$, $F_{m,n}$ and the $\sigma(X,Y)$-measurable random 
variable $U$ introduced in Section \ref{deviations}. The following 
two lemmas show how these objects affect $\Delta$ and will be the 
key tools in the proof of the main theorem of this paper.

\begin{lemma}\label{lem3.1}
For all $\delta<5/6$ and $\varepsilon>0$, 
\begin{multline*}
\E\left[\Delta\,\Bigr\|\, U\geq m\varepsilon, F_{m,n}\right]
\geq
\left[\frac{1}{2}-3\max\left(\epsilon_4(\delta,\varepsilon)
+2\varepsilon
,\epsilon_3(\delta,\varepsilon)\right)\right]\times\varepsilon\\
+\min\left[\frac{1}{2}-3\left(\epsilon_4(\delta,\varepsilon)+\varepsilon
\right),-2\epsilon_2(\delta,\varepsilon)\right]\times(1-\varepsilon).
\end{multline*}
\end{lemma}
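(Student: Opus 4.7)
The plan is to condition on $(X,Y)$ and then on the random index $T$, exploiting the combinatorial observations of Example~\ref{henry's example1}. For $\omega$ in the conditioning event $\{U\geq m\varepsilon\}\cap F_{m,n}$, set $\xi=\Xi_\omega$, $\lambda=\Lambda_\omega$ as in Section~\ref{ergodic}; by Lemma~\ref{lemma1.2} these are optimal alignments with $d:=d(\xi,\lambda)=U(\omega)\geq m\varepsilon$. Write $\Delta_i$ for the value of $\Delta$ when $T=i$. The bit-flipping analysis of Example~\ref{henry's example1} (combined with the a~priori bound $|\Delta|\leq 1$) yields $\Delta_i\geq 1$ in the three configurations (a)~$\xi(i)\neq\lambda(i)$ with $Y_{\xi(i)}\neq Y_{\lambda(i)}$; (b)~$\xi(i)\neq\lambda(i)$ with $Y_{\xi(i)}=Y_{\lambda(i)}\neq X_i$; (d)~$\xi(i)=\lambda(i)$ with $X_i\neq Y_{\xi(i)}$; while only the crude bound $\Delta_i\geq-1$ survives in the remaining configurations (c)~$\xi(i)\neq\lambda(i)$ with $X_i=Y_{\xi(i)}=Y_{\lambda(i)}$; (e)~$\xi(i)=\lambda(i)$ with $X_i=Y_{\xi(i)}$. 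Since $T\sim{\mathscr U}(\NN_m)$ is independent of $(X,Y)$, $\E[\Delta\|X,Y]=m^{-1}\sum_i\Delta_i$ is at least the ``good minus bad'' signed count divided by $m$.

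I then split according to whether $d\leq m(1-\varepsilon)$ (Case~A) or $d>m(1-\varepsilon)$ (Case~B), matching the two pieces of the definition of $F_{m,n}$. In Case~A the events $F_{\xi,\lambda}^\varepsilon$ and $G_{\xi,\lambda}^\varepsilon$ both hold, so the proportions of types (a),(b),(c) among the $d$ disagreement indices are within $\epsilon_3$ of $1/2,1/4,1/4$ and the proportions of types (d),(e) among the $m-d$ agreement indices are within $\epsilon_2$ of $1/2,1/2$. Summing with signs gives the normalized estimates $d^{-1}\sum_{\xi(i)\neq\lambda(i)}\Delta_i\geq 1/2-3\epsilon_3$ and $(m-d)^{-1}\sum_{\xi(i)=\lambda(i)}\Delta_i\geq-2\epsilon_2$, so that $\E[\Delta\|X,Y]\geq(d/m)(1/2-3\epsilon_3)-(1-d/m)\cdot 2\epsilon_2$. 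This expression is non-decreasing in $d/m$ once $1/2-3\epsilon_3+2\epsilon_2>0$ (which holds for small $\delta,\varepsilon$), so its minimum over $d/m\in[\varepsilon,1-\varepsilon]$ is attained at $d/m=\varepsilon$. In Case~B only $H_{\xi,\lambda}^\varepsilon$ is at our disposal, controlling the empirical distribution $\widehat{{\mathscr L}}_{\xi,\lambda}^{unif}$ up to error $\epsilon_4+2\varepsilon$; since types (a),(b),(d) correspond to $R_i^{\xi,\lambda}\in\{0,1\}$ and types (c),(e) to $R_i^{\xi,\lambda}=-1$, a signed sum over $i\in\NN_m$ gives $\E[\Delta\|X,Y]\geq 1/2-3(\epsilon_4+2\varepsilon)$.

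To conclude I verify that both case-wise pointwise bounds dominate the right-hand side of the lemma. In Case~A this is immediate from $\epsilon_3\leq\max(\epsilon_4+2\varepsilon,\epsilon_3)$ and $-2\epsilon_2\geq\min(1/2-3(\epsilon_4+\varepsilon),-2\epsilon_2)$, which upgrades the endpoint expression $\varepsilon(1/2-3\epsilon_3)-2\epsilon_2(1-\varepsilon)$ to the claimed lower bound. In Case~B the $\varepsilon$-free bound $1/2-3(\epsilon_4+2\varepsilon)$ is compared directly with the convex combination on the right, and one checks that the residual positive term leftover from the cancellations stays non-negative in the parameter regime of interest. Taking the outer conditional expectation over $(X,Y)\in\{U\geq m\varepsilon\}\cap F_{m,n}$ then yields the lemma.

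The main obstacle is the bookkeeping in Case~B: the uniform estimate $1/2-3(\epsilon_4+2\varepsilon)$ looks structurally different from the weighted combination in the lemma, and demonstrating dominance requires careful tracking of the cross terms in $\varepsilon$. The $\max$ and $\min$ in the statement are precisely what convexifies the two case-specific estimates into a single uniform lower bound suitable for deployment in Section~\ref{main}.
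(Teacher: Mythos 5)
Your proof follows essentially the same route as the paper's: identify the three configurations (your (a),(b),(d), i.e.\ $R_i^{\xi,\lambda}\in\{0,1\}$) in which a bit-flip at $i$ forces $\Delta=1$, use $\Delta\geq-1$ for the remaining two, write $\E[\Delta\,\|\,X,Y]=m^{-1}\sum_i\Delta_i$ as a signed count over types, and then control that count on the event $F_{m,n}$ by splitting according to whether $d\leq m(1-\varepsilon)$ (where $F^\varepsilon_{\xi,\lambda}\cap G^\varepsilon_{\xi,\lambda}$ applies) or $d>m(1-\varepsilon)$ (where $H^\varepsilon_{\xi,\lambda}$ applies). The paper organizes the same estimates as a law-of-total-expectation decomposition into the three conditioning events $\{U\geq m(1-\varepsilon)\}$, $\{m(1-\varepsilon)\geq U\geq m\varepsilon,\Xi(T)\neq\Lambda(T)\}$, $\{m(1-\varepsilon)\geq U\geq m\varepsilon,\Xi(T)=\Lambda(T)\}$ and then applies a $\max$/$\min$ convexification to recombine; you instead work pointwise in $(X,Y)$ and integrate at the end. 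Those are cosmetically different presentations of the same argument.

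Two remarks. First, your Case~B domination is in fact immediate: both entries of the $\max$ and $\min$ in the claimed bound are $\leq\tfrac12-3(\epsilon_4+2\varepsilon)$, so any convex combination with weights $\varepsilon$ and $1-\varepsilon$ is $\leq\tfrac12-3(\epsilon_4+2\varepsilon)$; there are no cross terms to track, and the ``residual positive term'' language overcomplicates this step. Second, your Case~A argument needs the affine function $(d/m)(\tfrac12-3\epsilon_3)-(1-d/m)\cdot 2\epsilon_2$ to be minimized at the left endpoint $d/m=\varepsilon$, which requires $\tfrac12-3\epsilon_3+2\epsilon_2\geq 0$ (equivalently, since $\epsilon_3=\tfrac32\epsilon_2$, $\epsilon_2\leq\tfrac15$); you flag this but do not resolve it. Note, however, that the paper's last displayed inequality rests on the implicit assumption $\tfrac12-3\max(\epsilon_4+2\varepsilon,\epsilon_3)\geq\min\bigl(\tfrac12-3(\epsilon_4+2\varepsilon),-2\epsilon_2\bigr)$, which fails in exactly the same regime, so this is an imprecision you inherit from (and share with) the original rather than introduce.
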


\begin{proof}
A key observation is that $Y_{\Xi(T)}\neq Y_{\Lambda(T)}$ 
implies $\Delta=1$: Without loss of generality we may assume 
that $\tilde{X}_T=Y_{\Xi(T)}$, so that 
$X_T\neq Y_{\Xi(T)}$ and $S^*(X,Y)=\sum_{i\neq T}
s(X_i,Y_{\Xi(i)})$. But then we have 
\begin{multline*}
S^*(X,Y)+1\geq S^*(\tilde{X},\tilde{Y})\geq S(\tilde{X},
\tilde{Y};\Xi)\\
=\sum_{i\neq T}s(X_i,Y_{\Xi(i)})+s(\tilde{X}_T,Y_{\Xi(T)})
=S^*(X,Y)+1,
\end{multline*}
so that $\Delta=1$ indeed. Likewise, $X_T\neq Y_{\Xi(T)}
=Y_{\Lambda(T)}$ implies $\Delta=1$. Using these facts and 
the trivial lower bound $\Delta\geq -1$, we have 
\begin{align}
\E&\left[\Delta\,\Bigr\|\, U\geq m(1-\varepsilon),F_{m,n}\right]
\nonumber\\
&\geq\E\left[1\times\Prob_{\widehat{{\mathscr L}}^{unif}_{\Xi,
\Lambda}}[0]+1\times\Prob_{\widehat{{\mathscr L}}^{unif}_{\Xi,
\Lambda}}[1]-1\times\Prob_{\widehat{{\mathscr L}}^{unif}_{\Xi,
\Lambda}}[-1]\,\Bigr\|\, U\geq m(1-\varepsilon), F_{m,n}\right]
\nonumber\\
&\geq 1\times\left(\Prob_{{\mathscr J}^{unif}}[0]-\epsilon_4
(\delta,\varepsilon)-2\varepsilon\right)+1\times\left(\Prob_{{\mathscr J}
^{unif}}[1]-\epsilon_4(\delta,\varepsilon)-2\varepsilon\right)\nonumber\\
&\qquad\qquad-1\times\left(\Prob_{{\mathscr J}^{unif}}[-1]
+\epsilon_4(\delta,\varepsilon)+2\varepsilon\right)
\nonumber\\
&=\frac{1}{2}-3\left(\epsilon_4(\delta,\varepsilon)+2\varepsilon\right),
\label{round1}
\end{align}
\begin{align}
\E&\left[\Delta\,\Bigr\|\,m(1-\varepsilon)\geq U\geq m\varepsilon, 
F_{m,n}, \Xi(T)\neq\Lambda(T)\right]
\nonumber\\
&\geq\E\Bigl[1\times\Prob_{\widehat{{\mathscr L}}^{disag}_{\Xi,
\Lambda}}[0]+1\times\Prob_{\widehat{{\mathscr L}}^{disag}_{\Xi,
\Lambda}}[1]-1\times\Prob_{\widehat{{\mathscr L}}^{disag}_{\Xi,
\Lambda}}[-1]\,\Bigr\|\,m(1-\varepsilon)\geq U\geq m\varepsilon,
\nonumber\\
&\hspace{4cm} F_{m,n},\Xi(T)\neq\Lambda(T)\Bigr]
\nonumber\\
&\geq 1\times\left(\Prob_{{\mathscr J}^{disag}}[0]-\epsilon_3
(\delta,\varepsilon)\right)+1\times\left(\Prob_{{\mathscr J}
^{disag}}[1]-\epsilon_3(\delta,\varepsilon)\right)\nonumber\\
&\qquad\qquad-1\times\left(\Prob_{{\mathscr J}^{disag}}[-1]
+\epsilon_3(\delta,\varepsilon)\right)
\nonumber\\
&=\frac{1}{2}-3\epsilon_3(\delta,\varepsilon),
\label{round2}
\end{align}
\begin{align}
\E&\left[\Delta\,\Bigr\|\,m(1-\varepsilon)\geq U\geq m\varepsilon, 
F_{m,n}, \Xi(T)=\Lambda(T)\right]
\nonumber\\
&\geq\E\Bigl[1\times\Prob_{\widehat{{\mathscr L}}^{agree}_{\Xi,
\Lambda}}[1]-1\times\Prob_{\widehat{{\mathscr L}}^{agree}_{\Xi,
\Lambda}}[-1]\,\Bigr\|\,m(1-\varepsilon)\geq U\geq m\varepsilon, 
F_{m,n},\Xi(T)=\Lambda(T)\Bigr]
\nonumber\\
&\geq 1\times\left(\Prob_{{\mathscr J}^{agree}}[1]-\epsilon_2
(\delta,\varepsilon)\right)-1\times\left(\Prob_{{\mathscr J}^{agree}}[-1]
+\epsilon_2(\delta,\varepsilon)\right)
\nonumber\\
&=-2\epsilon_2(\delta,\varepsilon),
\label{round3}
\end{align}
Putting the pieces together, we find 
\begin{align*}
\E&\left[\Delta\,\Bigr\|\,U\geq m\varepsilon, F_{m,n}\right]\\
&\geq\E\left[\Delta\,\Bigr\|\,U\geq m(1-\varepsilon), F_{m,n}\right]
\times\Bigl(\Prob\left[U\geq m(1-\varepsilon), \Xi(T)\neq
\Lambda(T)\,\Bigr\|\,U\geq m\varepsilon, F_{m,n}\right]\\
&\hspace{4cm}+\Prob\left[U\geq m(1-\varepsilon),\Xi(T)=\Lambda
(T)\,\Bigr\|\,U\geq m\varepsilon, F_{m,n}\right]\Bigr)\\
&+\E\left[\Delta\,\Bigr\|\,m(1-\varepsilon)\geq U\geq m\varepsilon, 
F_{m,n},\Xi(T)\neq\Lambda(T)\right]\\
&\hspace{4cm}\times
\Prob\left[m(1-\varepsilon)\geq U\geq m\varepsilon, 
\Xi(T)\neq\Lambda(T)\,\Bigr\|\,U\geq m\varepsilon, F_{m,n}\right]\\
&+\E\left[\Delta\,\Bigr\|\,m(1-\varepsilon)\geq U\geq m\varepsilon, 
F_{m,n},\Xi(T)=\Lambda(T)\right]\\
&\hspace{4cm}\times
\Prob\left[m(1-\varepsilon)\geq U\geq m\varepsilon, 
\Xi(T)=\Lambda(T)\,\Bigr\|\,U\geq m\varepsilon, F_{m,n}\right]
\end{align*}
\begin{align*}
&\stackrel{\eqref{round1},\eqref{round2},\eqref{round3}}{\geq}
\left[\frac{1}{2}-3\max\left(\epsilon_4(\delta,\varepsilon)+2\varepsilon,
\epsilon_3(\delta,\varepsilon)\right)\right]\times
\Prob\left[\Xi(T)\neq\Lambda(T)\,\Bigr\|\,U\geq m\varepsilon, 
F_{m,n}\right]\\
&\hspace{2cm}+\min\left[\frac{1}{2}-3\left(\epsilon_4(\delta,
\varepsilon)+2\varepsilon\right),
-2\epsilon_2(\delta,\varepsilon)\right]\times\Prob\left[\Xi(T)=\Lambda(T)\,
\Bigr\|\,U\geq m\varepsilon, F_{m,n}\right]\\
&\hspace{.7cm}\geq\left[\frac{1}{2}-3\max\left(\epsilon_4(\delta,
\varepsilon)+2\varepsilon,
\epsilon_3(\delta,\varepsilon)\right)\right]\times\varepsilon+
\min\left[\frac{1}{2}-3\left(\epsilon_4(\delta,\varepsilon)+
2\varepsilon\right),
-2\epsilon_2(\delta,\varepsilon)\right]\times(1-\varepsilon).
\end{align*}
\end{proof}

\begin{lemma}\label{lem3.2}
For any $\sigma(X,Y)$-measurable event $B$ and all $\delta<5/6$, 
we have 
\begin{equation*}
\int_B\Delta d\Prob\geq -4\epsilon_1(\delta)-8\e^{-nH(\delta)}.
\end{equation*}
\end{lemma}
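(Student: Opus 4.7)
The plan is to reduce Lemma \ref{lem3.2} to a pointwise lower bound for $\E[\Delta\|X,Y]$ on the event $E_{m,n}$, then handle the exceptional event $E_{m,n}^c$ with the trivial bound $|\Delta|\leq 1$. To obtain this trivial bound, note that for any fixed $\xi\in\mathcal{A}_{m,n}$, the scores $S(X,Y;\xi)$ and $S(\tilde{X},\tilde{Y};\xi)$ differ only in the term indexed by $T$, so $|S(\tilde{X},\tilde{Y};\xi)-S(X,Y;\xi)|\leq 1$; applying this with $\xi$ chosen optimal for $(X,Y)$ (resp.\ for $(\tilde{X},\tilde{Y})$) yields $|\Delta|\leq 1$.

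Next, I would exploit the same one-sided comparison with a specific optimal alignment. Let $\Xi=\Xi_\omega$ be the $\sigma(X,Y)$-measurable optimal alignment supplied by Lemma \ref{lemma1.2}. For each index $t\in\mathbb{N}_m$ with $X_t\neq Y_{\Xi(t)}$, flipping $X_t$ turns a mismatched pair into a matched pair in $\Xi$, so $S(\tilde{X},\tilde{Y};\Xi)=S^*(X,Y)+1$ and hence $\Delta\geq 1$ when $T=t$; for all other indices we only have $\Delta\geq -1$. Since $T$ is uniform on $\mathbb{N}_m$ and independent of $(X,Y)$,
\begin{equation*}
\E[\Delta\|X,Y]\geq\frac{\#\{t:X_t\neq Y_{\Xi(t)}\}-\#\{t:X_t=Y_{\Xi(t)}\}}{m}.
\end{equation*}
On $E_{m,n}\subseteq E_{\Xi}$, the defining inequality \eqref{round10.1} applied to $(e_1,e_2)=(0,1)$ and $(1,0)$ forces $\#\{t:X_t\neq Y_{\Xi(t)}\}/m\in(1/2-2\epsilon_1(\delta),\,1/2+2\epsilon_1(\delta))$, and symmetrically for the matching count; subtracting gives $\E[\Delta\|X,Y]\geq -4\epsilon_1(\delta)$ on $E_{m,n}$.

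Finally, I would split
\begin{equation*}
\int_B\Delta\,d\Prob=\int_{B\cap E_{m,n}}\E[\Delta\|X,Y]\,d\Prob+\int_{B\cap E_{m,n}^c}\Delta\,d\Prob,
\end{equation*}
bound the first term below by $-4\epsilon_1(\delta)\Prob[B\cap E_{m,n}]\geq -4\epsilon_1(\delta)$, and bound the second term below by $-\Prob[E_{m,n}^c]\geq -8\e^{-nH(\delta)}$ using $|\Delta|\leq 1$ together with Lemma \ref{lem2.1}(i). Adding these two estimates yields precisely the asserted inequality. The only subtlety I expect is confirming that the pair $(\omega,\,\Xi_\omega)$ behaves correctly under the random flip: because $\Xi$ is $\sigma(X,Y)$-measurable while $T$ is independent of $\sigma(X,Y)$, the inequality $\Delta\geq\mathbf{1}\{X_T\neq Y_{\Xi(T)}\}-\mathbf{1}\{X_T=Y_{\Xi(T)}\}$ really does hold pointwise in $\omega$, and averaging over $T$ against the uniform distribution yields the bound claimed above without any measurability gymnastics.
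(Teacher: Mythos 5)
Your proposal is correct and follows essentially the same route as the paper's proof: split the integral over $E_{m,n}$ and its complement, control the exceptional set via $\Delta\geq -1$ and Lemma~\ref{lem2.1}(i), and on $E_{m,n}$ bound $\Delta$ below pointwise by the score change of the fixed optimal alignment $\Xi$ from Lemma~\ref{lemma1.2}, whose conditional expectation over the uniform index $T$ is at least $-4\epsilon_1(\delta)$ by~\eqref{round10.1}. The only cosmetic difference is that you explicitly isolate the conditional expectation $\E[\Delta\,\|\,X,Y]$ and spell out the derivation of $|\Delta|\leq 1$, both of which the paper leaves implicit.
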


\begin{proof}
\begin{equation}
\int_B\Delta d\Prob\geq\int_{B\cap E_{m,n}}\Delta d\Prob-
\Prob[E_{m,n}^c]
\stackrel{Lem\ref{lem2.1}}{\geq}\int_{B\cap E_{m,n}}
\Delta d\Prob-8\e^{-n H(\delta)}.\label{3.2}
\end{equation}
Clearly, for all $\omega\in\Omega$, 
\begin{equation*}
\Delta(\omega)\geq S\bigl(\tilde{X}(\omega),\tilde{Y}(\omega);
\Xi_{\omega}\bigr)-S\bigl(X(\omega),Y(\omega);\Xi_{\omega}\bigr).
\end{equation*}
Therefore, 
\begin{align*}
\int_{B\cap E_{m,n}}\Delta d\Prob&\geq
\int_{B\cap E_{m,n}}\left[S\left(\tilde{X},\tilde{Y};\Xi\right)
-S\left(X,Y;\Xi\right)\right]d\Prob\\
&=\int_{B\cap E_{m,n}}\left[s\left(\tilde{X}_T,Y_{\Xi(T)}\right)-
s\left(X_T,Y_{\Xi(T)}\right)\right]d\Prob\left[\left(X(\omega),Y
(\omega),T(\omega)\right)\right]\\
&=\int_{B\cap E_{m,n}}\E\left[s\left(\tilde{X}_T,Y_{\Xi(T)}\right)-
s\left(X_T,Y_{\Xi(T)}\right)\,\Bigr\|\,(X,Y)\right]
d\Prob\left[\left(X(\omega),Y(\omega)\right)\right]\\
&\stackrel{\eqref{round10.1}}{\geq}\int_{B\cap E_{m,n}}
-1\times 4\epsilon_1(\delta)\,d\Prob\left[\left(X(\omega),
Y(\omega)\right)\right]\geq-4\epsilon_1(\delta).
\end{align*}
Together with \eqref{3.2} this implies the claim. 
\end{proof}

%------------------------------------------------------------------------
\section{Proof of The Main Theorem}\label{main}
%------------------------------------------------------------------------

After introducing the tools of Sections \ref{deviations} and 
\ref{ergodic}, the stage is set for a proof of Theorem
\ref{maintheorem}.\\

\begin{proof}
Since $\{\omega:\, U<m\varepsilon\}\cup F_{m,n}^c$ is 
$\sigma(X,Y)$-measurable, we have 
\begin{align*}
0&=\E[\Delta]
=\E\left[\Delta\,\Bigr\|\,U\geq m\varepsilon, F_{m,n}\right]
\times\Prob\left[U\geq m\varepsilon, F_{m,n}\right]+
\int_{\{U<m\varepsilon\}\cup F_{m,n}^c}\hspace{-1cm}\Delta d\Prob
\stackrel{Lem\ref{lem3.1},Lem\ref{lem3.2}}{\geq}\\
&\geq
\left(\left[\frac{1}{2}-3\max\left(\epsilon_4(\delta,\varepsilon)
+2\varepsilon,\epsilon_3(\delta,\varepsilon)\right)\right]\times
\varepsilon+\min\left[\frac{1}{2}-3\left(\epsilon_4(\delta,
\varepsilon)+2\varepsilon\right),
-2\epsilon_2(\delta,\varepsilon)\right]\times(1-\varepsilon)\right)\\
&\hspace{2cm}\times\left(\Prob\left[U\geq m\varepsilon\right]-\Prob
\left[F_{m,n}^c\right]\right)\\
&\quad-\left(4\epsilon_1(\delta)+8\e^
{-nH(\delta)}\right).
\end{align*}
Therefore, 
\begin{multline*}
\Prob\left[U\geq m\varepsilon\right]\\
\leq\frac{4\epsilon_1(\delta)+8\e^{-nH(\delta)}}
{\left[\frac{1}{2}-3\max\left(\epsilon_4(\delta,\varepsilon)+2\varepsilon,
\epsilon_3(\delta,\varepsilon)\right)\right]\times\varepsilon+\min\left[
\frac{1}{2}-3\left(\epsilon_4(\delta,\varepsilon)+2\varepsilon\right),
-2\epsilon_2(\delta,\varepsilon)\right]\times(1-\varepsilon)}\\
+10\e^{-nH(\delta)}
=\frac{{\mathcal O}\left(\delta^{\frac{1}{2}}\right)}
{{\mathcal O}(\varepsilon)+{\mathcal
O}\left(\delta^{\frac{1}{2}}\right)}+{\mathcal O}
\left(\e^{-n\delta}\right).
\end{multline*}
\end{proof}

\hspace{1cm}\\
\vspace{1cm}

\bibliographystyle{plain}
\bibliography{align}

\end{document}